\documentclass[11pt,leqno]{amsart}
\usepackage{amssymb,amsmath,amsthm,amsfonts}
\usepackage{graphicx}
\usepackage{float}
\usepackage{xcolor}

\setlength{\textwidth}{6.5in}
\setlength{\oddsidemargin}{-0.in}
\setlength{\evensidemargin}{-0.in}
\setlength{\textheight}{8.5in}
\setlength{\topmargin}{-0.5in}

\setcounter{tocdepth}{1}

\DeclareMathOperator{\osc}{osc} \DeclareMathOperator{\Proj}{Proj}

\DeclareMathOperator{\BV}{BV}\DeclareMathOperator{\Graph}{Gr}

\newcommand{\field}[1]{\mathbb{#1}}
\newcommand{\N}{\field{N}}                      
\newcommand{\Z}{\field{Z}}                      
\newcommand{\R}{\field{R}}                      
\newcommand{\C}{\field{C}}                      


\newcommand{\dimAreg}{\dim_{A,reg}}

\newcommand{\ovdimB}{{\overline{\dim_B}}}

\theoremstyle{plain}
\newtheorem{theorem}{Theorem}

\newtheorem{corollary}[theorem]{Corollary}
\newtheorem{lemma}[theorem]{Lemma}

\theoremstyle{definition}
\newtheorem{definition}[theorem]{Definition}
\newtheorem{example}[theorem]{Example}
\newtheorem{remark}[theorem]{Remark}
\newtheorem{remarks}[theorem]{Remarks}
\newtheorem{question}[theorem]{Question}

\newtheorem{claim}[theorem]{Claim}

\numberwithin{theorem}{section} \numberwithin{equation}{section}

\newcommand{\eps}{\epsilon}

\newcommand{\cD}{{\mathcal D}}

\newcommand{\tf}{{\tilde{f}}}

\newcommand{\bz}{{\mathbf z}}
\newcommand{\bw}{{\mathbf w}}

\newcommand{\ceil}[1]{\left\lceil #1 \right\rceil}

\newcommand{\ubdim}{\overline{\dim}_B}
\newcommand{\assdim}{\dim_A}

\def\Barint_#1{\mathchoice
	{\mathop{\vrule width 6pt height 3 pt depth -2.5pt
			\kern -8pt \intop}\nolimits_{#1}}%
	{\mathop{\vrule width 5pt height 3 pt depth -2.6pt
			\kern -6pt \intop}\nolimits_{#1}}%
	{\mathop{\vrule width 5pt height 3 pt depth -2.6pt
			\kern -6pt \intop}\nolimits_{#1}}%
	{\mathop{\vrule width 5pt height 3 pt depth -2.6pt
			\kern -6pt \intop}\nolimits_{#1}}}

\title{On the Assouad spectrum of H\"older and Sobolev graphs}
\date{\today}
\author{Efstathios-K. Chrontsios-Garitsis}
\address{Department of Mathematics \\ University of Tennessee, Knoxville \\ 1403 Circle Dr \\ Knoxville, TN 37966}
\email{echronts@utk.com, echronts@gmail.com}
\author{Jeremy T. Tyson}
\address{Department of Mathematics \\ University of Illinois at Urbana-Champaign \\ 1409 West Green Street \\ Urbana, IL 61801}
\email{tyson@illinois.edu}
\subjclass{Primary: 28A78; Secondary: 26A16, 26A27, 26A46, 46E35}
\keywords{box-counting dimension, Assouad dimension, Assouad spectrum, H\"older continuity, Sobolev regularity, graphs of functions.}

\begin{document}
	\maketitle
	
	\begin{abstract}
		We provide upper bounds for the Assouad spectrum $\dim_A^\theta(\Graph(f))$ of the graph of a real-valued H\"older or Sobolev function $f$ defined on an interval $I \subset \R$. We demonstrate via examples that all of our bounds are sharp. In the setting of H\"older graphs, we further provide a geometric algorithm which takes as input the graph of an $\alpha$-H\"older continuous function satisfying a matching lower oscillation condition with exponent $\alpha$ and returns the graph of a new $\alpha$-H\"older continuous function for which the Assouad $\theta$-spectrum realizes the stated upper bound for all $\theta\in (0,1)$. Examples of functions to which this algorithm applies include the continuous nowhere differentiable functions of Weierstrass and Takagi.
	\end{abstract}
	
	\section{Introduction}
	
	A fruitful line of study relates analytic regularity conditions of a continuous function $f:I\rightarrow \R$ to geometric regularity properties of its graph $\Graph(f):=\{(t,f(t))\subset \R^2: t\in I \}$. For instance, it is natural to inquire how regularity hypotheses on $f$ influence measures for the `fractality' of $\Graph(f)$, such as the values of various metrically defined notions of dimension.
	
	Notable examples in this regard are the continuous and nowhere differentiable functions of Weierstrass and Takagi. Given $a\in (0,1)$ and $b\in (1/a,\infty)$ we define the Weierstrass functions
	$$
	W_{a,b}(t):=\sum_{n=0}^{\infty} a^n \cos (b^n t), \qquad 0 \le t \le 1,
	$$ 
	and the Takagi functions
	$$
	T_{a,b}(t)=\sum_{n=0}^{\infty}a^n D(b^n t), \qquad 0 \le t \le 1,
	$$ 
	where $D$ is the $1$-periodic sawtooth function with $D(t)=t$ for all $t\in [0,1/2]$ and $D(t)=1-t$ for all $t\in [1/2,1]$. The graphs of these functions have seen sustained attention by the fractal geometry community for many years. For instance, a longstanding program of research focuses on the determination of metric dimensions of the graphs of the Weierstrass functions. While the upper box dimension of $\Graph(W_{a,b})$ has been known since the early 20th century (see \cite{Hardy1916}), its Hausdorff dimension was not conclusively determined until 2018 by Shen \cite{Shen}.
	
	The notions of dimension which we focus on in this paper are the {\bf Assouad dimension} $\dim_A$, introduced by Assouad in \cite{AssouadOriginal} under a different name, as well as the {\bf Assouad spectrum} $\dim_A^\theta$ (and its regularization $\dimAreg^\theta$), introduced by Fraser and Yu \cite{FRASER_YU_Introd_spectrum}. These notions of dimension quantify the covering properties of a given set or space at all locations and scales, with respect to a smaller scale which may or may not be quantitatively related to the larger scale. Precise definitions are recalled in Section \ref{sec:background}. 
	
	The exact value of the Assouad dimension for the graphs of Weierstrass and Takagi functions was posed as a question by Fraser \cite[Question 17.11.1]{FraserBook} and remains open. It is worth noting that while there are (to the best of our knowledge) no known partial results for the Weierstrass function, partial results and bounds on the Assouad dimension of the graphs of some Takagi functions were proved by Yu \cite{TakagiGraphYu} and Anttila, B\'ar\'any and K\"aenm\"aki \cite{Kaenm_Takagi}.
	
	In an effort to shed new light on this subject, we study how H\"older conditions on a function $f$ influence bounds for the Assouad spectra of $\Graph(f)$. Our main contributions are summarized in the following two theorems.
	
	\begin{theorem}\label{thm:spectrum_bdd_for_Holder}
		Let $I \subset \R$ be a closed interval and let $f:I\rightarrow \R$ be an $\alpha$-H\"older continuous function. Then
		\begin{equation}\label{eq:dimAreg_bound}
			\dimAreg^\theta \Graph(f) \leq \frac{2-\alpha-\theta}{1-\theta},
		\end{equation} for all $\theta \in (0,\alpha)$. 
	\end{theorem}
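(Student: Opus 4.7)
The plan is a direct covering estimate for $\Graph(f) \cap B(x, R)$, combined with an algebraic identification of the exponent.

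\emph{Step 1 (Covering estimate).} Let $H$ denote the $\alpha$-Hölder constant of $f$. Fix $x = (t_0, f(t_0)) \in \Graph(f)$, a scale $R \in (0, 1]$, and a smaller scale $r \in (0, R^{1/\theta}]$. The set $\Graph(f) \cap B(x, R)$ lies above $[t_0 - R, t_0 + R] \cap I$, so I would partition this horizontal interval into $\lceil 2R/r \rceil$ sub-intervals of length at most $r$. On each such sub-interval $J$ the Hölder condition yields $\osc_J f \leq Hr^\alpha$, so the portion of the graph over $J$ is contained in an $r \times Hr^\alpha$ rectangle, which can be covered by at most $1 + Hr^{\alpha - 1}$ squares of side $r$. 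Multiplying gives
\begin{equation*}
N\bigl(\Graph(f) \cap B(x, R), r\bigr) \lesssim \bigl(1 + R/r\bigr)\bigl(1 + Hr^{\alpha - 1}\bigr) \lesssim R\, r^{\alpha - 2},
\end{equation*}
with implicit constants depending only on $\alpha$ and $H$, valid in the regime $r \leq R \leq 1$ under consideration.

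\emph{Step 2 (Algebraic verification).} It then suffices to check $R\,r^{\alpha - 2} \lesssim (R/r)^s$ for $s := (2 - \alpha - \theta)/(1 - \theta)$, equivalently $r^{s + \alpha - 2} \leq R^{s - 1}$. A short computation gives
\begin{equation*}
s - 1 = \frac{1 - \alpha}{1 - \theta} \geq 0, \qquad s + \alpha - 2 = \frac{\theta(1 - \alpha)}{1 - \theta} \geq 0, \qquad \frac{s + \alpha - 2}{s - 1} = \theta,
\end{equation*}
so the desired inequality rewrites as $(r/R^{1/\theta})^{s + \alpha - 2} \leq 1$, which is immediate from the regularized Assouad-spectrum hypothesis $r \leq R^{1/\theta}$. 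Combining with the estimate from Step 1 yields $\dimAreg^\theta \Graph(f) \leq s$, as required.

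The main care point lies in Step 1, specifically in absorbing the lower-order terms $1$, $R/r$, and $Hr^{\alpha-1}$ in the raw product into the leading term $HR\,r^{\alpha-2}$; this relies on $r \leq R \leq 1$ together with $\alpha < 1$ (so that $r^{\alpha-1}$ is large for small $r$), and the borderline ranges of $(r, R)$ where a factor $1$ dominates contribute only to constants and may be handled separately by the trivial bound $N \lesssim R/r \leq (R/r)^s$ (using $s \geq 1$). The algebraic reduction in Step 2 is exact rather than approximate: the value $s = (2 - \alpha - \theta)/(1 - \theta)$ is precisely the exponent that produces equality at the critical scale $r = R^{1/\theta}$, which suggests that the bound is sharp and hints at the sharpness assertion made in the introduction.
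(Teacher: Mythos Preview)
Your proof is correct and follows essentially the same approach as the paper: a column-by-column covering of $\Graph(f)$ over intervals of length $r$, using the H\"older bound on the oscillation to obtain $N \lesssim R\,r^{\alpha-2}$, followed by the algebraic conversion of this bound to $(R/r)^s$ via the constraint $r \le R^{1/\theta}$. Your Step~2 presents the exponent identification somewhat more cleanly than the paper (which manipulates $R^{-1/\theta} \lesssim 1/r$ in several stages), but the underlying argument is identical.
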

	
	Note that when $\theta = \alpha$, the upper bound in \eqref{eq:dimAreg_bound} reaches $2$ and hence provides no nontrivial information. In fact, for values of $\theta \in (0,\alpha)$ the upper bound in \eqref{eq:dimAreg_bound} is sharp in the following sense:
	
	\begin{theorem}\label{th:Holder_bd_sharp}
		For every $\alpha\in (0,1)$ there exists an $\alpha$-H\"older continuous function $\tf=\tf_{\alpha}:[0,1]\rightarrow\R$ with
		\begin{equation}\label{eq:Holder_bd_sharp}
			\dim_A^\theta \Graph(\tf)=\dimAreg^\theta \Graph(\tf) = \frac{2-\alpha-\theta}{1-\theta},
		\end{equation} for all $\theta\in (0,\alpha)$.
	\end{theorem}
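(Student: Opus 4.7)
By Theorem \ref{thm:spectrum_bdd_for_Holder} and the general inequality $\dim_A^\theta(E)\le\dimAreg^\theta(E)$ valid for any bounded set $E$, it suffices to exhibit an $\alpha$-H\"older function $\tf:[0,1]\to\R$ satisfying
\[
\dim_A^\theta \Graph(\tf) \ge \frac{2-\alpha-\theta}{1-\theta} \quad\text{for every } \theta\in(0,\alpha);
\]
the two upper bounds and this lower bound then sandwich both spectra to the claimed common value.

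My plan is to start from a base $\alpha$-H\"older function $f_0:[0,1]\to\R$ satisfying a \emph{matching lower oscillation condition at exponent $\alpha$}, namely there exist $c,\delta_0>0$ such that every interval $J\subset[0,1]$ with $|J|<\delta_0$ contains a subinterval $J'$ of length $\ge c|J|$ with $\osc_{J'} f_0\ge c|J'|^\alpha$. Canonical examples are the Weierstrass function $W_{a,b}$ with $ab>1$ and $-\log a/\log b=\alpha$, and the Takagi function with matching parameters; the lower oscillation in either case is a standard lacunary computation. From $f_0$ I would build $\tf$ via the geometric algorithm outlined in the abstract: fix a lacunary sequence $R_k=q^k\downarrow 0$ (with $q\in(0,1)$ to be tuned), choose pairwise disjoint subintervals $I_k\subset[0,1]$ with $|I_k|=R_k$ and left endpoints $u_k$, and set $\tf(u):=R_k^\alpha f_0((u-u_k)/R_k)+c_k$ on $I_k$ with vertical shifts $c_k$ chosen to preserve continuity, extending $\tf$ on the complement by linear interpolation between the boundary values of the $I_k$. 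A direct computation shows that each inserted piece inherits the $\alpha$-H\"older constant of $f_0$, so global $\alpha$-H\"older continuity of $\tf$ follows once the $I_k$ are spaced so that adjacent boundary slopes remain controlled.

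For the lower bound on $\dim_A^\theta$, fix $\theta\in(0,\alpha)$ and let $k$ be large. Pick $y_k\in\Graph(\tf)$ lying over the midpoint of $I_k$; then $B(y_k,R_k/2)\cap\Graph(\tf)$ contains an affine image of $\Graph(f_0)$ of horizontal extent $\asymp R_k$ and vertical extent $\asymp R_k^\alpha$. Covering this image by disks of radius $r=R_k^{1/\theta}$ requires at least $cR_k\,r^{\alpha-2}$ disks, because the matching lower oscillation of $f_0$ supplies, on each of the $\asymp R_k/r$ horizontal sub-columns of width $r$, an oscillation $\gtrsim r^\alpha$ that demands $\gtrsim r^{\alpha-1}$ vertical disks. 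Expressing $R_k\,r^{\alpha-2}$ as $(R_k/r)^s$ with $r=R_k^{1/\theta}$ yields $s=(2-\alpha-\theta)/(1-\theta)$, which is the required lower bound on $\dim_A^\theta \Graph(\tf)$.

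The main technical obstacle is to orchestrate a single construction that realizes the lower bound along a sequence $R_k\to 0$ for \emph{every} $\theta\in(0,\alpha)$ simultaneously. This requires the lacunarity parameter $q$ to be fixed independently of $\theta$ so that infinitely many of the scales $R_k$ are usable for each $\theta$, and, more subtly, it requires the matching lower oscillation of $f_0$ to propagate into the inserted copy $g_k$ at every sub-scale $r\le R_k$, not merely at the macroscopic scale $R_k$. This is precisely why the hypothesis on $f_0$ is a \emph{matching} lower oscillation and not just a lower bound on the total oscillation: it forces each inserted piece to spend its full H\"older budget uniformly at all sub-scales, which is exactly what is needed to pin down the Assouad spectrum at every $\theta\in(0,\alpha)$.
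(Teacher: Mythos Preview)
Your counting computation is correct in isolation, but the construction has a genuine gap at the step where you claim that $B(y_k,R_k/2)\cap\Graph(\tf)$ contains the affine copy of $\Graph(f_0)$ over $I_k$. With horizontal scaling $R_k$ and vertical scaling $R_k^\alpha$, that copy has vertical extent $\asymp R_k^\alpha$, and since $\alpha<1$ we have $R_k^\alpha\gg R_k$ for small $R_k$; the copy therefore cannot sit inside any ball of radius comparable to $R_k$. If instead you enlarge the ball to radius $R\asymp R_k^\alpha$ so that the copy fits, then with $r=R^{1/\theta}=R_k^{\alpha/\theta}$ your column count gives $N\gtrsim R_k\,r^{\alpha-2}$, but now $R/r=R_k^{\alpha(1-1/\theta)}$ and solving $N\gtrsim(R/r)^s$ yields only
\[
s=\frac{2\alpha-\alpha^2-\theta}{\alpha(1-\theta)} < \frac{2-\alpha-\theta}{1-\theta},
\]
strictly below the target. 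Conversely, if you compress vertically so that the copy fits in a square of side $R_k$, the induced lower oscillation on width-$r$ columns drops to $\gtrsim R_k^{1-\alpha}r^\alpha$ rather than $r^\alpha$, and the resulting exponent collapses to $2-\alpha$, i.e.\ merely the box dimension. No single affine rescaling of an $\alpha$-H\"older function can simultaneously confine the graph to an $R_k\times R_k$ square and retain a uniform $r^\alpha$ lower oscillation at all sub-scales $r\le R_k$; this tension is exactly what must be resolved.

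The paper addresses this by a different mechanism: rather than inserting affine copies, it starts from a single function $f$ with the two-sided H\"older/oscillation bounds and, on a nested sequence of squares $Q_k$ of side $r_k^{\theta_0}$ centred at local maxima, repeatedly \emph{reflects} the escaping portions of $\Graph(f)$ across the top and bottom edges of the half-square $R_k\subset Q_k$ until the graph over $\Proj_x(R_k)$ is entirely folded inside $R_k$. These reflections are piecewise isometries, so the $\alpha$-H\"older constant survives up to a factor $3$, and a short case analysis shows the lower oscillation on width-$r$ subintervals survives up to a factor $\tfrac12$. This produces, at every scale $r_k^{\theta_0}$, a square containing the full graph over its base with oscillation $\gtrsim r^\alpha$ on every sub-column, which is precisely the configuration your column count needs; the single parameter $\theta_0$ is then converted to all $\theta\in(0,\alpha)$ by the rescaling $\tilde r_k=r_k^{\theta_0/\theta}$. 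Your affine-insertion idea would need an analogous folding step to succeed.
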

	
	Recall (see e.g.\ \cite[Lemma 3.4.4]{FraserBook}) that the Assouad spectrum $\dim_A^\theta(E)$ tends to the upper box dimension $\ubdim(E)$ as $\theta\searrow 0$, for any set $E \subset \R^n$. Letting $\theta \searrow 0$ in Theorems \ref{thm:spectrum_bdd_for_Holder} and \ref{th:Holder_bd_sharp}, we recover the (known) sharp upper estimate $2-\alpha$ for the box-counting dimension of an arbitrary $\alpha$-H\"older continuous function $f:I \to \R$. See, for instance, \cite[Chapter 11]{FalconerBook}.
	
	Not only is the proof of Theorem \ref{th:Holder_bd_sharp} constructive, but it provides a geometric algorithm that modifies the Assouad spectrum of the graph of an appropriate function --- such as a Weierstrass or Takagi function --- while leaving the upper box dimension intact. This feature is worth emphasizing, since the algorithm involves countably many modifications to the graph and the upper box dimension is not countably stable in general. We believe that this method is of independent interest with potential applications to the study of other fractal sets. The key property which the algorithm seeks to ensure for the modified graph is that it lies for a uniformly sufficient amount of `time' within a predetermined sequence of squares. Such a property can be exploited to numerically estimate the extent to which the graph of a given function, e.g., a Weierstrass function, deviates from satisfying \eqref{eq:Holder_bd_sharp}, or from having Assouad dimension equal to $2$.
	
	\medskip
	
	We also consider functions in the Sobolev class $W^{1,p}$ for $1 \le p \le \infty$. For a closed interval $I$, recall that $f \in W^{1,p}(I)$ if $f'$ exists weakly as a function and $f,f' \in L^p(I)$. Since we are only interested in continuous functions $f$ we drop the integrability condition on $f$. We have
	$$
	W^{1,\infty} \subset W^{1,q} \subset W^{1,p} \subset W^{1,1} \subset \BV, \qquad p<q,
	$$
	where $\BV(I)$ denotes the space of functions of {\it bounded variation} on $I$. The space of continuous representatives of functions in $W^{1,\infty}(I)$ coincides with the space of bounded Lipschitz functions on $I$. An elementary estimate using the Fundamental Theorem of Calculus and H\"older's inequality shows that $W^{1,p}$ functions are $\alpha$-H\"older continuous with $\alpha = 1-1/p$. An application of Theorem \ref{thm:spectrum_bdd_for_Holder} then gives
	\begin{equation}\label{eq:adtGf1}
		\dimAreg^\theta\Graph(f) \le 1 + \frac{1}{(1-\theta)p}.
	\end{equation}
	On the other hand, the upper box dimension $\ubdim(\Graph(f))$ is equal to one for any BV function $f:I \to \R$; see Theorem \ref{th:ubd-bv}. By a standard estimate for the Assouad spectrum \cite[Lemma 3.4.4]{FraserBook}, we always have the bound
	\begin{equation}\label{eq:adtGf2}
		\dimAreg^\theta\Graph(f) \le \frac{\ubdim(\Graph(f))}{1-\theta} = 1 + \frac{\theta}{1-\theta}
	\end{equation}
	for any BV function $f$ and any $0<\theta<1$.
	
	Our next result provides an upper bound for $\dimAreg^\theta(\Graph(f))$ in the case when $f \in W^{1,p}(I)$, which improves on both \eqref{eq:adtGf1} and \eqref{eq:adtGf2}.
	
	\begin{theorem}\label{th:ass-spectrum-sobolev}
		Let $f \in W^{1,p}(I)$, $1\le p\le\infty$, be a continuous, real-valued function defined on an interval $I \subset \R$. Then
		\begin{equation}\label{eq:ass-spectrum-sobolev}
			\dimAreg^\theta\Graph(f) \le 1 + \frac{\theta}{(1-\theta)p}.
		\end{equation}
		for all $0<\theta<\tfrac{p}{p+1}$.
	\end{theorem}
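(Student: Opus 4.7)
The plan is to control, at every relevant pair of scales, the number of $\delta$-boxes needed to cover the portion of $\Graph(f)$ lying over a short interval, using the Fundamental Theorem of Calculus and H\"older's inequality twice. Fix a point $(t_0, f(t_0)) \in \Graph(f)$, scales $0 < \delta \leq R \leq \delta^\theta$, and note that the ball $B((t_0,f(t_0)),R)$ projects to an interval of length at most $2R$ in the $t$-axis. Partition this projection into $n \sim R/\delta$ closed subintervals $J_1,\dots,J_n$ each of length $\delta$. The number $N_\delta$ of $\delta$-squares needed to cover $\Graph(f) \cap B((t_0,f(t_0)),R)$ is then bounded by
\begin{equation*}
N_\delta \;\leq\; n \;+\; \frac{1}{\delta}\sum_{i=1}^n \osc(f, J_i).
\end{equation*}

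Next I would estimate $\osc(f,J_i)$ using $W^{1,p}$ regularity. Since a continuous representative of a $W^{1,p}$ function is absolutely continuous, on each $J_i$ we have $\osc(f,J_i) \leq \int_{J_i} |f'|$, and H\"older's inequality yields $\int_{J_i} |f'| \leq \delta^{1-1/p} \, \|f'\|_{L^p(J_i)}$. Summing over $i$ and applying the discrete H\"older inequality (with exponents $p$ and $p/(p-1)$) to the $n$ terms $\|f'\|_{L^p(J_i)}$, I obtain
\begin{equation*}
\sum_{i=1}^n \|f'\|_{L^p(J_i)} \;\leq\; n^{(p-1)/p} \Bigl(\sum_{i=1}^n \|f'\|_{L^p(J_i)}^p\Bigr)^{1/p} \;\leq\; (2R/\delta)^{(p-1)/p}\,\|f'\|_{L^p(I)}.
\end{equation*}
Combining these estimates and simplifying gives
\begin{equation*}
N_\delta \;\leq\; \frac{2R}{\delta} \;+\; C\,\|f'\|_{L^p(I)} \Bigl(\frac{R}{\delta}\Bigr)^{(p-1)/p}\delta^{-1/p}.
\end{equation*}

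The heart of the matter is converting the right-hand side into a power of $R/\delta$ alone. Here I would use the hypothesis $R \leq \delta^\theta$, which is equivalent to $\delta^{-1} \leq (R/\delta)^{1/(1-\theta)}$, and hence $\delta^{-1/p} \leq (R/\delta)^{1/(p(1-\theta))}$. Substituting yields
\begin{equation*}
N_\delta \;\leq\; \frac{2R}{\delta} \;+\; C\,\|f'\|_{L^p(I)}\,\Bigl(\frac{R}{\delta}\Bigr)^{(p-1)/p + 1/(p(1-\theta))} \;=\; \frac{2R}{\delta} \;+\; C\,\|f'\|_{L^p(I)}\,\Bigl(\frac{R}{\delta}\Bigr)^{1 + \theta/(p(1-\theta))},
\end{equation*}
after the algebraic simplification $(p-1)/p + 1/(p(1-\theta)) = 1 + \theta/(p(1-\theta))$. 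Since $R/\delta \geq 1$ and the exponent exceeds $1$, the second term dominates, so $N_\delta \leq C' (R/\delta)^{s}$ with $s = 1 + \theta/((1-\theta)p)$. This is precisely the bound on the regularized Assouad spectrum required by \eqref{eq:ass-spectrum-sobolev}.

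The restriction $\theta < p/(p+1)$ appears naturally: this is exactly the range where $s<2$, so that the bound is nontrivial (for $\theta \geq p/(p+1)$ the trivial bound $\dim_A^\theta \Graph(f) \leq 2$ already suffices). The main technical subtlety is the algebraic step of expressing $\delta^{-1/p}$ in terms of $R/\delta$ via the scale constraint $R \leq \delta^\theta$; once this is done, the two applications of H\"older (one on each subinterval, one on the sum) fit together cleanly. A minor point to handle is confirming that the continuous representative of $f$ is absolutely continuous so that the Fundamental Theorem of Calculus applies pointwise, but this is standard for $W^{1,p}$ functions with $p \geq 1$.
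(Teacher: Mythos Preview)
Your approach is essentially identical to the paper's: cover the projection of $B((t_0,f(t_0)),R)$ by $\sim R/\delta$ subintervals of length $\delta$, bound the oscillation on each via the Fundamental Theorem of Calculus and H\"older's inequality, apply discrete H\"older to the resulting sum, and then convert the residual factor into a power of $R/\delta$ using the scale constraint.

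There is, however, one slip. The defining scale constraint for the regularized Assouad spectrum is $\delta \le R^{1/\theta}$, equivalently $\delta^\theta \le R$, \emph{not} $R \le \delta^\theta$ as you wrote. In fact, your claimed equivalence ``$R \le \delta^\theta$, which is equivalent to $\delta^{-1} \le (R/\delta)^{1/(1-\theta)}$'' is false as stated: $R \le \delta^\theta$ actually yields the reverse inequality $\delta^{-1} \ge (R/\delta)^{1/(1-\theta)}$. The estimate $\delta^{-1} \le (R/\delta)^{1/(1-\theta)}$ that you need (and correctly use downstream) follows precisely from the \emph{correct} constraint $\delta^\theta \le R$. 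Once this sign is fixed, your argument is correct and matches the paper's line for line.
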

	
	As before, observe that the upper bound in \eqref{eq:ass-spectrum-sobolev} is only of interest when $\theta < \tfrac{p}{p+1}$; once $\theta = \tfrac{p}{p+1}$ the upper bound reaches $2 = \dim \R^2$ and the conclusion becomes trivial. For $\theta<\tfrac{p}{p+1}$ the conclusion in Theorem \ref{th:ass-spectrum-sobolev} is sharp.
	
	\begin{theorem}\label{th:ass-spectrum-sobolev-2}
		For each $1<p<\infty$, there exists $f \in W^{1,p}([0,1])$ so that 
		\begin{equation}\label{eq:ass-spectrum-sobolev-2}
			\dimAreg^\theta(\Graph(f)) = 1 + \frac{\theta}{(1-\theta)p}
		\end{equation}
		for all $0<\theta<p/(p+1)$.
	\end{theorem}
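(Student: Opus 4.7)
The approach is to construct $f \in W^{1,p}([0,1])$ as a continuous piecewise linear function supported on a sparse family of disjoint subintervals $I_k \subset [0,1]$, with $f|_{I_k}$ a sawtooth of many tall, narrow triangular teeth. The parameters will be calibrated so that (i) the $W^{1,p}$ norm converges, and (ii) at the scale pair $(R_k, R_k^{1/\theta})$ the covering count of the graph matches the upper bound of Theorem~\ref{th:ass-spectrum-sobolev} for every $\theta \in (0,p/(p+1))$.

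Write $\alpha = 1-1/p$ and $s(\theta) = 1 + \frac{\theta}{(1-\theta)p}$, and fix $\eta > 1$. For each $k \geq 2$ let
$$
R_k = 2^{-2k}, \quad w_k = R_k^{1+1/p}, \quad M_k = \lfloor R_k^{-1/p}\, k^{-\eta} \rfloor, \quad L_k = M_k w_k \asymp R_k/k^\eta,
$$
and place the intervals $I_k$ of length $L_k$ (e.g.\ centered near $2^{-k}$) so that they are pairwise disjoint with consecutive intervals separated by more than $R_k$. On each $I_k$ let $f$ be the continuous piecewise linear sawtooth consisting of $M_k$ triangular teeth of width $w_k$ and height $R_k$, and set $f\equiv 0$ elsewhere. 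Each tooth has slope $\pm 2R_k/w_k$, so using $R_k^p w_k^{1-p} = R_k^{1/p}$ a direct computation yields $\int_{I_k}|f'|^p \asymp M_k R_k^{1/p} \asymp k^{-\eta}$. Since $\eta>1$, $\|f'\|_{L^p}^p \lesssim \sum_k k^{-\eta} < \infty$, so $f \in W^{1,p}([0,1])$.

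The central step is to establish $\dim_A^\theta\Graph(f) \geq s(\theta)$ for every $\theta\in(0,p/(p+1))$. Set $r_k = R_k^{1/\theta}$. The inequality $\theta<p/(p+1)$ is equivalent to $r_k<w_k$, which ensures distinct teeth of the $k$-th sawtooth must be covered by distinct $r_k$-balls. Each tooth is tall and thin ($R_k \gg w_k$), so its two side segments of length $\asymp R_k$ individually require at least $\asymp R_k/r_k$ balls of radius $r_k$; the spacing condition guarantees that $\Graph(f)\cap B((x_k,0),2R_k)$ contains only the $k$-th bumpy region. Hence
$$
N\bigl(\Graph(f)\cap B((x_k,0),2R_k),\,r_k\bigr) \gtrsim M_k R_k/r_k = R_k^\alpha/(k^\eta r_k) = (R_k/r_k)^{s(\theta)}/k^\eta,
$$
using the identity $(R/r)^{s(\theta)} = R^\alpha/r$ when $r = R^{1/\theta}$. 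Since $R_k/r_k = R_k^{1-1/\theta}$ grows exponentially in $k$ while $k^\eta$ grows only polynomially, for each $\epsilon>0$ we have $N \gtrsim (R_k/r_k)^{s(\theta)-\epsilon}$ for $k$ large, forcing $\dim_A^\theta\Graph(f)\ge s(\theta)$. Combining with the universal inequality $\dim_A^\theta\leq\dimAreg^\theta$ and the upper bound $\dimAreg^\theta\leq s(\theta)$ from Theorem~\ref{th:ass-spectrum-sobolev} yields the desired equality \eqref{eq:ass-spectrum-sobolev-2}.

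The principal obstacle I foresee is the rigorous lower bound on $N$: one must verify that a single $r_k$-ball cannot simultaneously cover large portions of two adjacent teeth (ruled out by $r_k<w_k$), nor cover a single tooth's apex region more efficiently than the two slanted side segments can individually (ruled out by $R_k\gg w_k$ together with standard Pythagorean estimates). The calibration $w_k = R_k^{1+1/p}$ is critical: it is precisely the value at which the tooth width coincides with the small scale $r = R^{1/\theta}$ at the boundary $\theta = p/(p+1)$, which is what allows a single function $f$ to realize the sharp bound $s(\theta)$ for every admissible $\theta$ simultaneously.
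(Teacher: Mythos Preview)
Your construction is correct and gives a genuinely different proof from the paper's. The paper builds a single piecewise linear zigzag accumulating at the origin, with vertices $\bz_m=(a_m,(-1)^m a_m)$ on the lines $y=\pm x$ where $a_m=m^{-p}\log^{-2}(m)$; all of the Assouad-type behaviour is witnessed by the single sequence of squares $Q(0,a_n)$, and the lower bound is obtained by exhibiting large $r_n$-separated sets along the many segments of varying slope $\asymp m$ trapped in $Q(0,a_n)$. Your construction instead distributes the work over a sparse sequence of disjoint sawtooth packets $I_k$, each with many teeth of a \emph{single} slope $\asymp R_k^{-1/p}$, and witnesses the spectrum at $(x_k,0)$ via a direct length-counting argument. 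Both use a summability correction to hit $W^{1,p}$ exactly (their $\log^{-2}$, your $k^{-\eta}$), and in both cases the correction is sub-polynomial in the ratio $R/r$ and so washes out of the dimension. What your approach buys is modularity and transparency: the calibration $w_k=R_k^{1+1/p}$ makes the threshold $\theta=p/(p+1)$ visible as the exact scale at which $r_k=w_k$, and the covering lower bound reduces to the elementary observation that a ball of radius $r_k<w_k$ meets $O(1)$ tooth-segments and hence covers $O(r_k)$ of graph length. What the paper's approach buys is a structurally simpler function (one accumulation point, monotone slope growth) together with the geometric bonus that $\Graph(f)$ has a full quarter-plane as a weak tangent at the origin, immediately giving $\dim_A\Graph(f)=2$ as well. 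Two small points to tighten in your write-up: choose the center $\bz_k$ to be an actual graph point (e.g.\ a base vertex of the $k$-th sawtooth) so that the definition of $\dim_A^\theta$ applies literally, and note that the flat portion of $\Graph(f)$ inside your ball only helps the lower bound.
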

	
	This paper is organized as follows. Section \ref{sec:background} reviews basic notation and terminology, including definitions for the Assouad dimension $\dim_A(E)$ and Assouad spectrum $\dim_A^\theta(E)$ of a set $E$. 
	
	Section \ref{sec:Holder} contains our results on the Assouad spectra of H\"older graphs. In particular, in subsection \ref{subsec:holder-sharpness} we present an algorithm for the construction of $\alpha$-H\"older continuous functions $\tf:I \to \R$ for which the Assouad spectrum, $\dim_A^\theta(\Graph(\tf))$, realizes the upper bound in \eqref{eq:dimAreg_bound} for all $\theta$. Fixing $\alpha\in (0,1)$ and $\theta_0 \in (0,\alpha)$, the algorithm takes as input an $\alpha$-H\"older continuous function $f$ which satisfies a matching lower oscillation condition with the same exponent $\alpha$. The graph of $f$ is modified by a sequence of transformations, each of which reflects a selected subgraph across the boundary of a selected square $Q$. The choice of where to implement these reflections inside of $Q$ depends in a subtle way on the parameter $\theta_0$. The reflections in question are chosen to force the new graph to lie entirely inside the right or left half of squares in the designated sequence while maintaining the H\"older regularity. The output of this algorithm is a new function $\tf$ whose graph is localized within a sequence of squares, and which consequently (by construction) attains the desired value of $\dim_A^{\theta_0}(\Graph(\tf))$. However, an elementary observation regarding the behavior of the covering number for $\Graph(\tf) \cap Q$ with respect to different choices of the smaller scale allows us to promote this conclusion from the single choice $\theta = \theta_0$ to the full range of Assouad spectra $\dim_A^{\theta}(\Graph(\tf))$ for {\bf all} $\theta\in (0,\alpha)$.
	We explore the question of lower bounds for the Assouad spectrum further in subsection \ref{subsec:lower-bounds}, where we identify related (but weaker) lower oscillation and graph localization conditions which ensure the validity of other lower bounds on the Assouad spectrum.
	
	In section \ref{sec:Sobolev} we turn to the study of graphs of Sobolev functions. After some preliminary comments about box-counting dimensions of Sobolev and BV graphs, we prove Theorems \ref{th:ass-spectrum-sobolev} and \ref{th:ass-spectrum-sobolev-2}. The construction of a Sobolev graph which realizes the upper bound in \eqref{eq:ass-spectrum-sobolev} is considerably easier than the previous algorithmic construction of a H\"older extremal function. A suitably chosen piecewise linear graph, oscillating infinitely often between the graphs of $y=x$ and $y=-x$ and converging to the origin, realizes the desired dimensional value.
	
	\smallskip
	
	\paragraph{\bf Acknowledgments.} 
	
	We gratefully acknowledge valuable insights from Jonathan Fraser on the subject of Assouad dimensions of graphs, and especially for bringing to our attention the recent preprint \cite{Kaenm_Takagi} on the Assouad dimensions of Takagi graphs. We also thank Roope Anttila for pointing out that an earlier, weaker version of Question \ref{Que: weak lower bound T and W} was in fact known. JTT acknowledges support from the Simons Foundation under grant \#852888, `Geometric mapping theory and geometric measure theory in sub-Riemannian and metric spaces'. In addition, this material is based upon work supported by and while JTT was serving as a Program Director at the U.S. National Science Foundation. Any opinion, findings, and conclusions or recommendations expressed in this material are those of the authors and do not necessarily reflect the views of the National Science Foundation. Lastly, we wish to thank the anonymous referee for reading our manuscript carefully, and for providing insightful feedback that has improved the exposition.
	
	\section{Background}\label{sec:background}
	We first establish notation which will be in use throughout the paper. For $x\in \R$, denote by $\lceil x \rceil$ the ceiling function of $x$, i.e. the smallest integer larger than or equal to $x$. Similarly, we denote by $\lfloor x\rfloor$ the floor function of $x$, i.e. the largest integer smaller than or equal to $x$. 
	
	Points in the plane $\R^2$ will be denoted by $\bz,\bw,\ldots$. We denote by $Q(\bz,r)$ the closed axes-oriented square centered at a point $\bz\in \R^2$ of side length $2r$. For a nonempty set $A\subset \R^2$ we denote by $\Proj_x(A)$ and $\Proj_y(A)$ the projection of $A$ onto the $x$-axis and the $y$-axis, respectively. The complement of a set $E \subset \R^2$ is denoted $E^\complement := \R^2 \setminus E$.
	
	For functions $A(R,r)$ and $B(R,r)$ with $0<r\leq R<1$, we write $A(R,r)\lesssim B(R,r)$ if there is a constant $C>0$ that does not depend on $R$ or $r$ such that $A(R,r)\leq C B(R,r)$ for all $0<r\leq R<1$.
	
	For a bounded interval $I \subset \R$ we denote by $|I|$ its length. Moreover, for a function $f:I\rightarrow \R$, we denote the oscillation of $f$ over $J$ by $\osc(f,J)=\sup\{ |f(t)-f(s)|: \, t,\, s \in J \}$ for any interval $J\subset I$. We also write $\Graph(f;J) := \{ (t,f(t)): t\in J \}$ for any interval $J\subset I$.
	
	Recall that the Assouad dimension of a set $E\subset \R^2$ is defined as
	\begin{equation}\label{def:Assouad_dim}
		\dim_A(E) := \inf \left\{\alpha>0 \,:\, {\exists\,C>0\mbox{ s.t. } N(D(\bz,R) \cap E,r) \le C (R/r)^{\alpha} \atop \mbox{ for all $0<r\leq R$ and all $\bz \in E$}} \right\},
	\end{equation} 
	where $N(F,r)$ denotes the least number of sets of diameter at most $r$ needed to cover $F$. Assouad dimension was introduced (under a different name and with a different definition) by Assouad in \cite{AssouadOriginal}; the formulation here is taken from \cite{FraserBook}. We also recall the notion of Assouad spectrum, introduced by Fraser and Yu \cite{FRASER_YU_Introd_spectrum}. It is a one-parameter family of metrically defined dimensions which interpolates between the upper box-counting dimension and the (quasi-)Assouad dimension. Specifically, the Assouad spectrum of a set $E \subset \R^2$ is a collection of values
	$$
	\{\dim_A^\theta(E):0<\theta<1\},
	$$
	where
	\begin{equation}\label{def:Assouad-spectrum}
		\dim_{A}^\theta(E) := \inf \left\{\alpha>0 \,:\, {\exists\,C>0\mbox{ s.t. } N(D(\bz,R) \cap E,r) \le C (R/r)^{\alpha} \atop \mbox{ for all $0<r= R^{1/\theta}< R< 1$ and all $\bz \in E$}} \right\}.
	\end{equation} 
	While one could replace $R$ by $r^\theta$ in  \eqref{def:Assouad-spectrum} for simplicity, which we actually implement for arguments in later sections, we prefer to state the definition of the Assouad spectrum in this form to indicate the relation to the Assouad dimension.
	
	\begin{remark}
		Note that the covering number $N(D(\bz,R) \cap E,r)$ can be replaced by the number of closed (or open) sub-squares of $Q(\bz,R)$ of side length $r$ needed to cover $Q(\bz,R) \cap E$ (or the interior of $Q(\bz,R) \cap E$) without affecting the values of the dimensions $\dim_A(E)$ and $\dim_A^\theta(E)$ in \eqref{def:Assouad_dim} and \eqref{def:Assouad-spectrum}, respectively. See, e.g., \cite[pp.\ 3 and 11]{FraserBook} and \cite[Proposition 2.5]{QCourPAPER}. In later sections, we will alternate between equivalent definitions depending on what is most convenient for each argument.
	\end{remark} 
	
	The map $\theta \mapsto \dim_A^\theta(E)$ is continuous (even locally Lipschitz) when $0<\theta<1$, and
	$$
	\dim_A^\theta(E) \to \ovdimB(E) \quad \mbox{as $\theta \to 0$}, \qquad \dim_A^\theta(E) \to \dim_{qA}(E) \quad \mbox{as $\theta \to 1$} \, .
	$$
	Here $\ovdimB(E)$ denotes the upper box-counting dimension of $E$, while $\dim_{qA}(E)$ denotes the {\it quasi-Assouad dimension} of $E$; the latter is a variant of Assouad dimension introduced by L\"u and Xi \cite{Lu_Xi_QuasiAssouad}. We always have $\dim_{qA}(E) \le \dim_A(E)$, and equality holds in many situations (see \cite[Section 3.3]{FraserBook} for details).
	
	A slightly modified version of the Assouad spectrum where the relationship $R=r^\theta$ between the two scales is relaxed to an inequality $R\ge r^\theta$ leads to the notion of {\it upper Assouad spectrum}, denoted $\overline{\dim_A^\theta}(E)$ in the literature: see \cite[Section 3.3.2]{FraserBook} for more information. The key relationship between the two values (see \cite[Theorem 3.3.6]{FraserBook}) is that
	\begin{equation}\label{eq:regularization}
		\overline{\dim_A^\theta}(E) = \sup_{0<\theta'<\theta} \dim_A^{\theta'}(E).
	\end{equation}
	The authors have proposed in \cite{QCourPAPER} the term {\it regularized Assouad spectrum} in lieu of upper Assouad spectrum, and the notation $\dim_{A,reg}^\theta(E)$ in place of $\overline{\dim_A^\theta}(E)$; this terminology and notation will be used throughout this paper.
	
	\section{Dimensions of H\"older graphs}\label{sec:Holder}
	
	In this section we prove Theorems \ref{thm:spectrum_bdd_for_Holder} and \ref{th:Holder_bd_sharp}, which provide sharp upper bounds for the Assouad spectra of H\"older graphs.
	
	\subsection{Proof of Theorem \ref{thm:spectrum_bdd_for_Holder}} 
	
	Set $\beta=\frac{2-\alpha-\theta}{1-\theta}$ and let $\bz \in \Graph(f)$ with $\bz=(t, f(t))$ and $R\in (0,1)$, $\rho\in (0,R^{1/\theta}]$.
	
	We will prove that there is a constant $C$, depending only on $\beta$, such that the minimum number of sets of diameter at most $\rho$ needed to cover $D(\bz,R)\cap \Graph(f)$ does not exceed $C(R/\rho)^\beta$, i.e.
	$$
	N:=N(D(\bz,R)\cap \Graph(f),\rho)\leq C(R/\rho)^\beta.
	$$ 
	This estimate easily implies the desired conclusion. Set $r= \rho/\sqrt{2}$. We will use essentially disjoint squares of side-length $r$ to cover $D(\bz,R)\cap \Graph(f)$ and count how many such squares are needed.
	
	The projection of $D(\bz,R)\cap \Graph(f)$ onto the $x$-axis is included in the interval $I=[t-R,t+R]$. We need at most $M:=2 \lceil R/r \rceil$ intervals of the form $I_i=[t-R+ir,t-R+(i+1)r]$, $i=0, 1, \dots, M-1$ to cover $I$. Split all columns $J_i := I_i \times \R$ into squares of side-length $r$. In a given column $J_i$ there can be at most $\osc(f,I_i)/r+2$ such squares intersecting $\Graph(f)$. Using all the squares in columns $J_i$, $i=0, 1, \dots M$, to cover $D(\bz,R)\cap\Graph(f)$, and noting that each of these squares has diameter $\sqrt{2}r=\rho$, we deduce that
	$$
	N\leq \sum_{i=0}^{M-1}(\osc(f,I_i)/r+2).
	$$ 
	By the $\alpha$-H\"older continuity of $f$ we know that $\osc(f,I_i)\lesssim r^\alpha$ which, along with $M\leq 4R/r$, implies that
	\begin{equation}\label{eq:N_less_than_R/r^exp}
		N\lesssim \sum_{i=0}^{M-1}(r^{\alpha-1}+2)\leq \frac{4R}{r^{2-\alpha}}+\frac{8R}{r} \lesssim \left(\frac{R}{r}\right)^{2-\alpha} R^{\alpha-1}.
	\end{equation} 
	Note that the estimate $R/r\leq R/r^{2-\alpha}$ follows from the restrictions $r<1$ and $1-\alpha>0$.
	
	Recalling that the choice of $\rho=\sqrt{2}r\leq R^{1/\theta}$ was arbitrary, we conclude that
	\begin{equation}\label{eq:Rtor}
		R^{-1/\theta}\lesssim \frac{1}{r}.
	\end{equation}
	Multiplying both sides of \eqref{eq:Rtor} by $R$ and raising to the (positive) power of $1-\alpha$ yields
	$$
	R^{(1-\alpha)(1-1/\theta)}\lesssim \left(\frac{R}{r}\right)^{1-\alpha}.
	$$ 
	Since $R<1$, $\theta\in (0,1)$, and $1-1/\theta=(\theta-1)/\theta<0$, the above implies that
	$$
	R^{\alpha-1}\lesssim \left(\frac{R}{r}\right)^\frac{(1-\alpha)\theta}{1-\theta}
	$$ 
	Combining this with \eqref{eq:N_less_than_R/r^exp} yields
	$$
	N\leq \left(\frac{R}{r}\right)^{\frac{(1-\alpha)\theta}{1-\theta}+2-\alpha} = \left( \frac{R}{r} \right)^\beta,
	$$ 
	since
	$$
	\frac{(1-\alpha)\theta}{1-\theta}+2-\alpha= \frac{(1-\alpha)\theta+1-\theta+(1-\alpha)(1-\theta)}{1-\theta}= \beta.
	$$ 
	Hence $N\lesssim \left(\frac{R}{r}\right)^\beta \lesssim\left(\frac{R}{\rho}\right)^\beta$ as needed. This concludes the proof of Theorem \ref{thm:spectrum_bdd_for_Holder}.
	
	\subsection{Proof of Theorem \ref{th:Holder_bd_sharp}}\label{subsec:holder-sharpness}
	
	Fix $\alpha\in (0,1)$ and fix an arbitrary function $f:[0,1]\rightarrow\R$ for which there are $C>0$, $c>0$ and $r_0>0$ such that
	\begin{equation}\label{eq:f_upper_Holder}
		|f(t)-f(s)|\leq C |t-s|^\alpha
	\end{equation}
	for all $t, s \in [0,1]$ and
	\begin{equation}\label{eq:f_lower_Holder}
		\osc(f,I)\geq c |I|^\alpha
	\end{equation} 
	for all intervals $I\subset [0,1]$ with $|I|\leq r_0$. An example of such a function is the Takagi function $T_{a,2}$ where $a:=2^{-\alpha}$; the $\alpha$-H\"older continuity is proved in \cite[Proposition 2.3]{Baranski}, while \cite[Theorem 2.4]{Baranski} asserts the existence of $c>0$ such that for all $t, s\in (0,1)$ we have $|f(t)-f(s)|\geq c |t-s|^\alpha$. The latter condition is a lower H\"older continuity estimate that implies \eqref{eq:f_lower_Holder}. Another possible candidate for $f$ is the Weierstrass function $W_{b^{-\alpha},b}$ for sufficiently large $b>1$; see \cite[Theorem 1.31 and Theorem 1.32]{Hardy1916} and \cite[Example 11.3]{FalconerBook} for details.
	
	Now fix $\theta_0\in (0,\alpha)$. We will modify the graph of $f$ on a sequence of squares $\{Q_k\}_{k\in \N}$ with decreasing side lengths $r_k^{\theta_0}\searrow 0$ to obtain the graph of a new function $\tf$. This modification aims to maximize the least number of sub-squares of $Q_k$  of side length $r_k$ needed to cover the resulting graph lying in $Q_k$, for all $k\in \N$. The output is a new function $\tf$ which again satisfies \eqref{eq:f_upper_Holder} and also obtains the desired equality \eqref{eq:Holder_bd_sharp} for the Assouad spectrum with parameter $\theta_0$. Then, for an arbitrary $\theta\in (0,\alpha)$, a simple choice of scales $\tilde{r}_k=r_k^{\theta_0/\theta}$ shows that the same $\tf$ satisfies the desired equality for $\dimAreg^\theta \Graph(f)$ as well without further modifications.
	
	The lower oscillation estimate \eqref{eq:f_lower_Holder} implies that $f$ is nowhere differentiable, consequently, its local extrema are dense in $[0,1]$. Let $m_0$ and $m_1$ be local maxima of $f$ in the intervals $(1/2-\min\{ r_0, 10^{-1} \}/2, 1/2)$ and $(1/2, 1/2+\min\{ r_0, 10^{-1} \}/2)$ respectively. Define
	$$
	\delta(m):= \sup \{ \rho>0 : f(x)< f(m) \,\,\,\,\, \forall x\in (m-\rho, m+\rho) \},
	$$ for each maximum $m\in (0,1)$ of $f$. Note that $\min\{\delta(m_0), \delta(m_1)\}\leq \min\{ r_0, 1/10\}$ by the choice of $m_0$ and $m_1$.
	
	Without loss of generality assume that $f(m_1)<f(m_0)$, so that $\delta(m_1)\leq \min\{ r_0, 1/10\}$ and $\Graph(f)$ intersects the left side of the square $Q_1:= Q(\bz_1, r_1^{\theta_0})$ at its midpoint $\bz_1':= (m_1-r_1^{\theta_0}, f(m_1))$, where $\bz_1:=(m_1, f(m_1))$ and $r_1:=\delta(m_1)^{1/\theta_0}$. The proof follows similarly if $f(m_1)>f(m_0)$. By the choice of $r_1$, the graph of $f$ stays below the horizontal line $y=f(m_1)$ within the rectangle
	$$
	R_1:=[m_1-r_1^{\theta_0}, m_1]\times [f(m_1)-r_1^{\theta_0}, f(m_1)+r_1^{\theta_0}] \subset Q_1.
	$$
	
	\begin{figure}[H]
		\centering
		\includegraphics[width=0.5\textwidth]{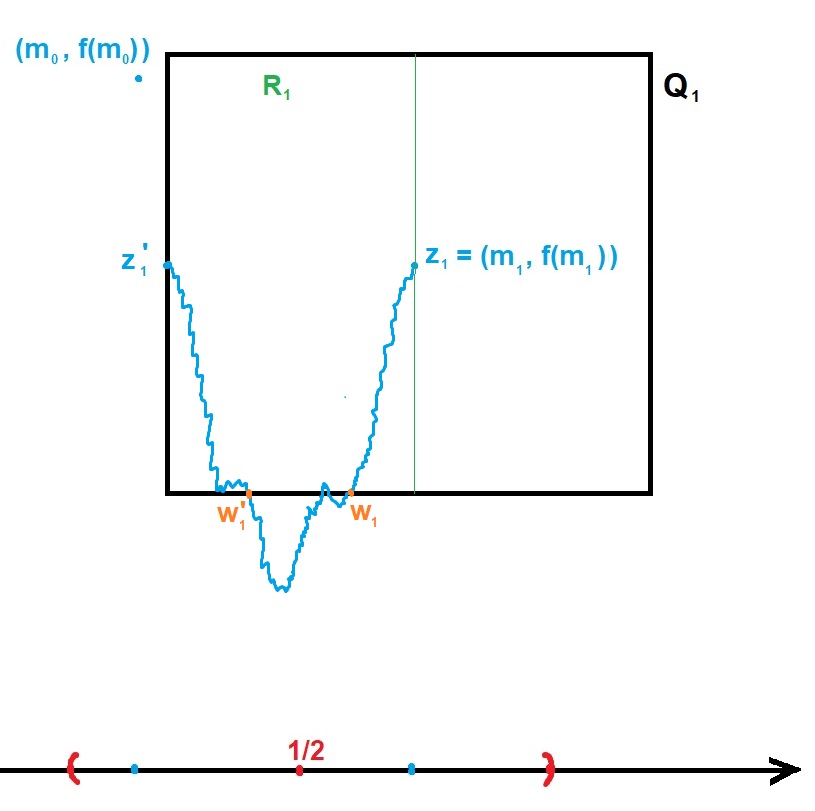}
		\caption{Choice of $\bz_1$ with $x$-coordinate lying in the red interval centered at $1/2$ of length $\min\{ r_0, 10^{-1} \}$.}
		\label{fig:Choosing point and radius 1}
	\end{figure}
	
	We shall modify the graph of $f$ in the strip $(m_1-r_1^{\theta_0},m_1)\times \R$ and this modification will be the graph of a new function $f_1$, with the property that $\Graph(f_1;[m_1-r_1^{\theta_0},m_1])$ lies in $R_1$. The square $Q_1$ is the first square in the sequence alluded to above. If $\Proj_x(\Graph(f)\cap R_1)=[m_1-r_1^{\theta_0},m_1]$, i.e., if the restricted graph $\Graph(f;[m_1-r_1^{\theta_0},m_1])$ already lies in $R_1$, then we set $f_1:=f$ and we proceed to the choice of $Q_2$. 
	
	Suppose $\Proj_x(\Graph(f)\cap R_1)\neq [m_1-r_1^{\theta_0},m_1]$. The idea is to reflect $\Graph(f)$ with respect to the lower and upper horizontal sides of $R_1$ in an alternating fashion until all of the graph between $x=m_1-r_1^{\theta_0}$ and $x=m_1$ has been ``folded'' inside $R_1$. More specifically, let $x_1'\in(m_1-r_1^{\theta_0},m_1)$ be the smallest value for which $f(x_1')=f(m_1)-r_1^{\theta_0}$ and there is $\epsilon_1>0$ such that
	$$
	f(x)\geq f(m_1)-r_1^{\theta_0}
	$$ 
	for all $x\in (m_1-r_1^{\theta_0}, x_1')$, and
	$$
	f(x)<f(m_1)-r_1^{\theta_0}
	$$ 
	for all $x\in (x_1', x_1'+\eps_1)$. In other words, the point $\bw_1':=(x_1', f(x_1'))$ is the first point from $\bz_1'=(m_1-r_1^{\theta_0}, f(m_1))$ towards $\bz_1=(m_1, f(m_1))$ at which the graph of $f$ intersects the lower side of $R_1$ and stays outside $R_1$ afterwards within some small strip $(x_1', x_1'+\eps_1)\times \R$. Similarly, define $\bw_1:= (x_1, f(x_1))$ to be the last point from $\bz_1'$ towards $\bz_1$ where $\Graph(f)$ intersects the lower side of $R_1$ and stays inside $R_1$ afterwards within $(x_1, m_1)$. See Figure \ref{fig:Choosing point and radius 1}.
	
	We reflect $\Graph(f;(x_1',x_1))\cap R_1^\complement$ with respect to the line $y=f(m_1)-r_1^{\theta_0}$. Note that by \eqref{eq:f_lower_Holder}, if $r_1^{\theta_0}$ is small enough, it is very likely that the reflected part of the graph will now intersect and exit the upper side of $R_1$, since it could be the case that $c(r_1^{\theta_0})^\alpha-r_1^{\theta_0}>2r_1^{\theta_0}$. In that case, following the same argument but for the upper side of $R_1$, we again reflect the part of $\Graph(f)$ that escapes $R_1$ from above, this time with respect to the horizontal line $y=f(m_1)+r_1^{\theta_0}$. Again, it may be the case that the newly reflected part now escapes $R_1$ from the lower side of $R_1$, in which case we perform yet another reflection with respect to $y=f(m_1)-r_1^{\theta_0}$. If we denote by $M_1:=\lceil Cr_1^{(\alpha-1){\theta_0}}-2 \rceil$ the smallest positive integer such that $C(r_1^{\theta_0})^\alpha-M_1 r_1^{\theta_0}\leq 2 r_1^{\theta_0}$, then we are guaranteed that after at most $M_1+1$ reflections, the entire graph of $f$ from $\bz_1'$ to $\bz_1$ must lie inside $R_1$ (due to the H\"older continuity of $f$). The preceding process gives rise to the graph of a new continuous function which we denote $f_1$. Note that $f_1$ satisfies
	$$
	\Proj_x(\Graph(f_1)\cap R_1)= [m_1-r_1^{\theta_0},m_1],
	$$ 
	and
	$$
	f_1|_{R_1^\complement} = f.
	$$
	
	We now pick a second square $Q_2$ and modify $f_1$ within $Q_2\cap R_1^\complement$. By the choice of $r_1^{\theta_0}=\delta(m_1)$, $f_1$ has a global maximum at $m_1$ within the interval $[m_1, m_1+10^{-1}r_1^{\theta_0}/2]$. Consequently, if $m_2$ is a local maximum of $f_1$ in $(m_1, m_1 +10^{-1}r_1^{\theta_0}/2)$, then $r_2^{\theta_0}:= \delta(m_2)<10^{-1} r_1^{\theta_0}$. Set $Q_2:= Q(\bz_2,r_2^{\theta_0})$. Since $m_1<m_2$ and $f(m_1)>f(m_2)$, $\Graph(f_1)$ intersects the left side of the rectangle
	$$
	R_2:= [m_2-r_2^{\theta_0},m_2]\times [f(m_2)-r_2^{\theta_0}, f(m_2)+r_2^{\theta_0}]
	$$ 
	at its mid point $\bz_2'=(m_2-r_2^{\theta_0}, f(m_2))$. In a similar fashion to the modifications which we performed on $f$ to construct $f_1$, we perform at most $M_2+1 := \lceil Cr_2^{(\alpha-1){\theta_0}}-1 \rceil$ reflections with respect to the lower and upper sides of $R_2$ to construct the graph of a new continuous function $f_2$. We conclude that
	$$
	\Proj_x(\Graph(f_2)\cap R_2)=[m_2-r_2^{\theta_0}, m_2],
	$$
	$$
	f_2|_{R_1} = f_1,
	$$
	and
	$$
	f_2|_{(R_1\cup R_2)^\complement} = f_1 = f.
	$$
	Note that $R_1\cap R_2= \emptyset$, despite the fact that $Q_2\subset Q_1$. This feature of the construction ensures that modifications taking place within $R_2$ do not affect any previous changes made within $R_1$.
	
	Assume that we have inductively constructed $Q_k=Q(\bz_k,r_k^{\theta_0})$ and $f_k$, where $k\in \N$, $k\geq 2$, $\bz_k=(m_k, f(m_k))$, $r_k^{\theta_0}= \delta(m_k)<10^{-k+1}r_{k-1}^{\theta_0}$ and
	$$
	\Proj_x(\Graph(f_k)\cap R_k)=[m_k-r_k^{\theta_0}, m_k],
	$$
	$$
	f_k|_{R_j} = f_j
	$$ 
	for all positive integers $j<k$, and
	$$
	\biggl. f_k\biggr|_{\left( \bigcup_{j=1}^k R_j \right)^\complement} = f.
	$$
	Pick a local maximum $m_{k+1}$ of $f_k$ in $(m_k, m_k+10^{-k}r_k^{\theta_0}/2)$, set $r_{k+1}^{\theta_0}:= \delta(m_{k+1})< 10^{-k}r_k^{\theta_0}$, and perform a similar suite of reflections on the rectangle
	$$
	R_{k+1}:= [m_{k+1}-r_{k+1}^{\theta_0}, m_{k+1}]\times [f(m_{k+1})-r_{k+1}^{\theta_0}, f(m_{k+1})+r_{k+1}^{\theta_0}]
	$$ 
	to construct the graph of a new continuous function $f_{k+1}$. Note that $R_{k+1}$ lies inside the square $Q_{k+1}:= Q(\bz_{k+1},r_{k+1}^{\theta_0})$, where $\bz_{k+1}=(m_{k+1}, f(m_{k+1}))$. See Figure \ref{fig:Choosing Q_k+1}.
	
	\begin{figure}[H]
		\centering
		\includegraphics[width=0.75\textwidth]{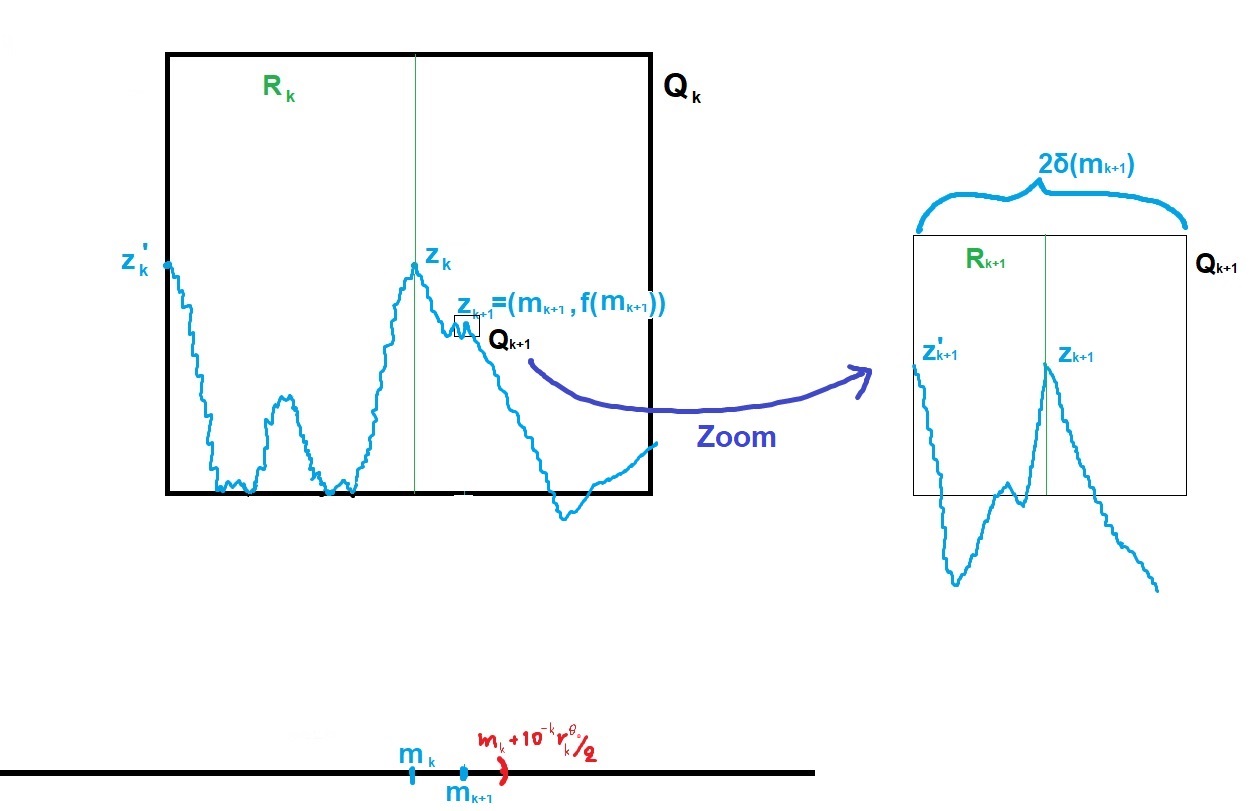}
		\caption{Choice of $Q_{k+1}$ inductively.}
		\label{fig:Choosing Q_k+1}
	\end{figure}
	
	We will show that the uniform limit $\tf$ of the sequence $\{ f_k \}_{k\in \N}$ satisfies the desired conditions with respect to the sequence of squares $\{ Q_k\}_{k\in \N}$, and that these conditions ensure that $\dim_A^{\theta_0} \Graph(\tf) \geq \frac{2-\alpha-{\theta_0}}{1-{\theta_0}}$. Note that by the choice of $r_k^{\theta_0}$, all squares in the sequence lie inside $Q_1$, and stay away from the end points of the graph due to the fact that $r_{k+1}^{\theta_0}<10^{-k}r_k^{\theta_0}<10^{-k}r_1^{\theta_0}$.
	
	We first show that $\tf$ is $\alpha$-H\"older and, more specifically, that
	\begin{equation}\label{eq:tf_upper_Holder}
		|\tf(t)-\tf(s)|\leq 3C |t-s|^\alpha \qquad \mbox{for all $t,s \in [0,1]$,}
	\end{equation}
	where $C$ is the constant from \eqref{eq:f_upper_Holder}.
	
	Let $t,s\in [0,1]$ with $t<s$. If $\tf(t)=f(t)$ and $\tf(s)=f(s)$, then \eqref{eq:tf_upper_Holder} follows from \eqref{eq:f_upper_Holder} trivially. Suppose $\tf(t)\ne f(t)$ and $\tf(s)=f(s)$. Then $t\in \Proj_x(R_k)$ and $f(t)\notin\Proj_y(R_k)$ for some $k\in \N$. Moreover, this integer $k$ is unique due to the disjointness of the rectangles $R_k$. If $(s,f(s))\in R_k$, then each of the reflections which we perform on $\Graph(f)$ moves the point $(t,f(t))\notin R_k$ closer to $(s,f(s))$ when measured along the $y$-axis, so
	$$
	|\tf(t)-\tf(s)|\leq |f(t)-f(s)|\leq 3C |t-s|^\alpha
	$$ 
	by \eqref{eq:f_upper_Holder}. Now suppose that $(s, f(s))\notin R_k$. Then
	$$
	|\tf(t)-\tf(s)|\leq |\tf(t)-\tf(m_k)|+|\tf(m_k)-\tf(s)|=|\tf(t)-f(m_k)|+|f(m_k)-f(s)|.
	$$
	However, the point $(t, f(t))$ gets closer to $\bz_k=(m_k, f(m_k))$ along the $y$-axis with every reflection, implying that $|\tf(t)-f(m_k)|\leq |f(t)-f(m_k)|$. Consequently, the preceding inequality in concert with \eqref{eq:f_upper_Holder} implies that
	$$
	|\tf(t)-\tf(s)|\leq |f(t)-f(m_k)|+|f(m_k)-f(s)|\leq C(|t-m_k|^\alpha+|m_k-s|^\alpha)\leq 2C |t-s|^\alpha,
	$$ 
	since $|t-m_k|, |m_k-s| \leq |t-s|$. Then \eqref{eq:tf_upper_Holder} trivially follows in this case as well. See Figure \ref{fig:tf_is_Holder_tf(s)=f(s)}. The proof is similar if $\tf(t)=f(t)$ and $\tf(s)\neq f(s)$, with the only difference being that the point $\bz_k'$ is used instead of $\bz_k$.
	
	\begin{figure}[H]
		\centering
		\includegraphics[width=0.45\textwidth]{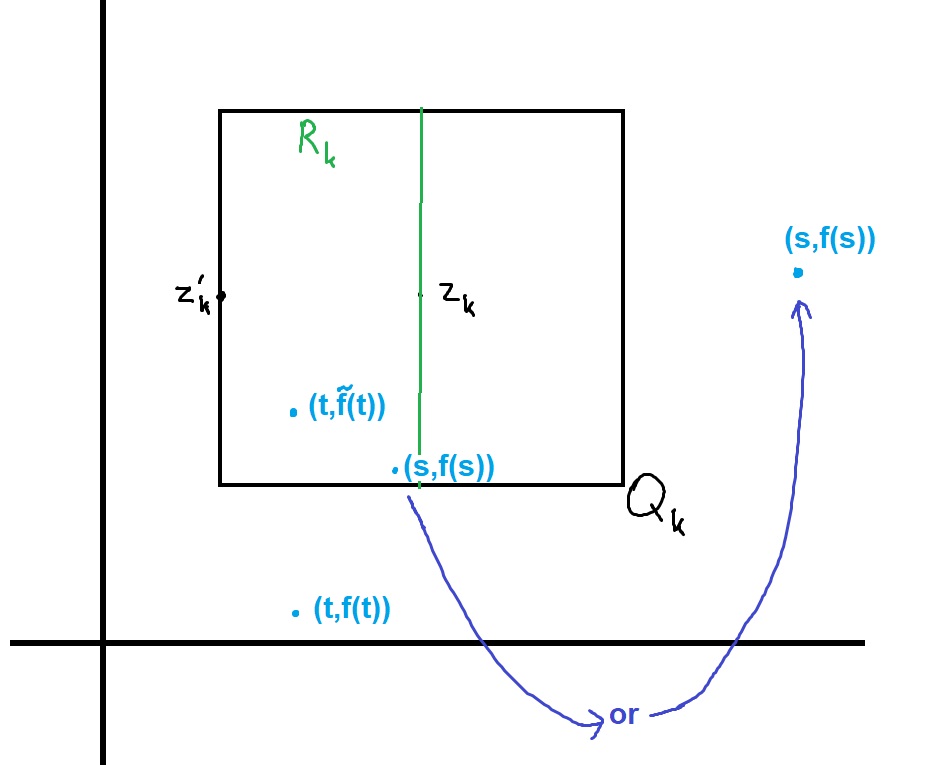}
		\caption{Showing H\"older condition in case $\tf(t)\neq f(t)$ and $\tf(s)=f(s)$, with the two possibilities for $(s,f(s))$ (lying in $R_k$ or not).}
		\label{fig:tf_is_Holder_tf(s)=f(s)}
	\end{figure}
	
	Now suppose that both $\tf(t)\neq f(t)$ and $\tf(s)\neq f(s)$. This implies that there are $k,\ell \in \N$ with $k\leq \ell$ such that $(t, \tf(t))\in R_k$ and $(s, \tf(s))\in R_\ell$. Suppose $k=\ell$. If $\Graph(f;[t,s])$ never crosses the upper or lower side of $R_k$ during any of the at most $M_k+1=\lceil Cr_k^{(\alpha-1){\theta_0}}-1 \rceil$ reflections performed to construct $f_k$, then $\Graph(\tf;[t,s])$ is isometric to $\Graph(f;[t,s])$. This implies that $|\tf(t)-\tf(s)|=|f(t)-f(s)|$ and \eqref{eq:tf_upper_Holder} trivially follows from \eqref{eq:f_upper_Holder}. For $i\leq M_k+1$, denote by $f_{i,k}$ the function resulting after $i$ reflections of $\Graph(f)$ during the construction of $f_k$ and suppose that some $i_0\leq M_k+1$ is the smallest integer for which $\Graph(f_{i_0,k};[t,s])$ intersects the upper or lower side of $R_k$. While $ |f_{i_0,k}(t)- f_{i_0,k}(s)|= |f(t)-f(s)|$ due to isometry of the two pieces of graphs, any subsequent reflection would only bring the point of $\Graph(f_{i_0,k})$ lying outside $R_k$ closer to the one lying within $R_k$. Hence, $|\tf(t)-\tf(s)|\leq |f(t)-f(s)|\leq 3C |t-s|^\alpha$ as desired. See Figure \ref{fig:tf_is_Holder_tf(s)!=f(s)}.
	
	\begin{figure}[H]
		\centering
		\includegraphics[width=0.45\textwidth]{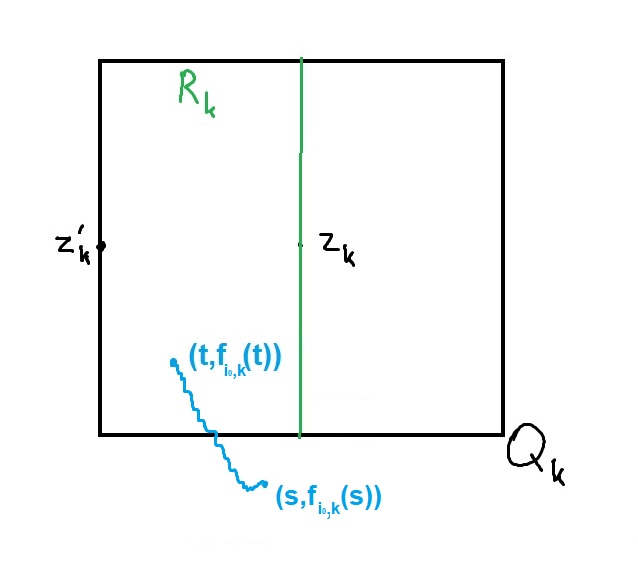}
		\caption{Showing H\"older condition in case $\tf(t)\neq f(t)$ and $\tf(s)\neq f(s)$, with $k=\ell$, and $\Graph(f_{i_0,k};[t,s])$ intersects the lower side of $R_k$ for some minimal $i_0$.}
		\label{fig:tf_is_Holder_tf(s)!=f(s)}
	\end{figure}
	
	Now suppose instead that $k<\ell$, and set $m_\ell'=m_\ell-r_\ell^{\theta_0}$. Then
	$$
	|\tf(t)-\tf(s)|\leq |\tf(t)-\tf(m_k)|+|\tf(m_k)-\tf(m_\ell')|+|\tf(m_\ell')-\tf(s)|.
	$$
	Since $\tf(m_k)=f(m_k)$ and $\tf(m_\ell')=f(m_\ell')$, similarly to the case where $\tf(t)\neq f(t)$ and $\tf(s)=f(s)$, it can be shown that
	$$
	|\tf(t)-\tf(m_k)|\leq |f(t)-f(m_k)|
	$$ 
	and
	$$
	|\tf(m_\ell')-\tf(s)|\leq |f(m_\ell')-f(s)|.
	$$ 
	The above inequalities, along with \eqref{eq:f_upper_Holder}, imply that
	$$
	|\tf(t)-\tf(s)|\leq C|t-m_k|^\alpha+C|m_k-m_\ell'|^\alpha+C|m_\ell'-s|^\alpha\leq 3C |t-s|^\alpha,
	$$ since $|t-m_k|$, $|m_k-m_\ell'|$, $|m_\ell'-s|$ are all less than $|t-s|$.
	
	In conclusion, $\tf$ is indeed an $\alpha$-H\"older continuous function, and consequently Theorem \ref{thm:spectrum_bdd_for_Holder} yields the upper bound $\dimAreg^{\theta}(\Graph(\tf))\leq\frac{2-\alpha-{\theta}}{1-{\theta}}$ for all $\theta\in (0,\alpha)$.
	
	Let $\theta\in (0,\alpha)$ and $\tilde{r}_k:=r_k^{\theta_0/\theta}$ so that $\tilde{r}_k^\theta=r_k^{\theta_0}$ for all $k\in \N$. This allows us to use the same sequence of squares $\{ Q_k \}$ we constructed for $\theta_0$. In order to show the lower bound $\dim_A^\theta(\Graph(\tf))\geq \frac{2-\alpha-\theta}{1-\theta}$, it suffices to prove that the number $N_k$ of squares of side-length $\tilde{r}_k$ of the form $[t-\tilde{r}_k^\theta+j\tilde{r}_k, t-\tilde{r}_k^\theta + (j+1) \tilde{r}_k] \times [\tf(t)-\tilde{r}_k^\theta+i\tilde{r}_k, \tf(t)-\tilde{r}_k^\theta + (i+1) \tilde{r}_k]$, $i, j\in \{0, 1, \dots, \lfloor r_k^{\theta-1}-1\rfloor\}$ needed to cover $R_k\cap\Graph(\tf) \subset Q_k\cap\Graph(\tf)$ is bounded below by a constant multiple of $\tilde{r}_k^{(\theta-1)\frac{2-\alpha-\theta}{1-\theta}}=\tilde{r}_k^{\alpha+\theta-2}$ for all $k\in \N$. That is because
	$$
	N_k\leq 9N(Q_k\cap \Graph(\tf),\tilde{r}_k),
	$$ 
	since any set of diameter at most $\tilde{r}_k$ cannot intersect more than $9$ such squares. The goal is to show for all $k\in \N$ a lower bound on the oscillation of $\tf$ on the intervals
	$$
	I_j^k:= [m_k-\tilde{r}_k^\theta+j\tilde{r}_k, m_k-\tilde{r}_k^\theta+(j+1)\tilde{r}_k] \subset \Proj_x(R_k),
	$$ 
	$j=0, 1, \dots, \lfloor \tilde{r}_k^{\theta-1}-1\rfloor$, which resembles \eqref{eq:f_lower_Holder}. This allows us to use a similar counting argument as in the proof of Theorem \ref{thm:spectrum_bdd_for_Holder}, but this time for the least number of subsquares needed, by counting the ones needed to cover $R_k\cap\Graph(\tf)$. Note the use of $\lfloor \tilde{r}_k^{\theta-1}-1\rfloor$ instead of $\lceil \tilde{r}_k^{\theta-1}-1\rceil$, which ensures that the interiors of the subsquares of side length $\tilde{r}_k$ that we use lie inside $R_k$, where $\Proj_x(\Graph(\tf)\cap R_k)=[m_k-\tilde{r}_k^\theta, m_k]$.
	
	Fix $k\in \N$ and set $r:=\tilde{r}_k$ and $I_j:=I_j^k$ for all $j$ to simplify the notation. We claim that
	\begin{equation}\label{eq:tf_lower_osc_int}
		\osc(\tf,I_j)\geq \tilde{c} r^\alpha
	\end{equation} for all $j=0, 1, \dots, \lfloor r^{\theta-1}-1\rfloor$, where $\tilde{c}=\min\{ 1, c/2 \}$ and $c$ is the constant from \eqref{eq:f_lower_Holder}. Indeed, by construction of $\tf$, for a given $I_j$ the part $\Graph(f;I_j)$ either never crosses any of the sides of $R_k$ during the $M_k+1$ reflections, in which case $\Graph(f;I_j)$ is isometric to $\Graph(\tf;I_j)$ and \eqref{eq:tf_lower_osc_int} trivially follows from \eqref{eq:f_lower_Holder}, or there is a minimal integer $i_0\in [0, M_k+1)$ for which $\Graph(f_{i_0,k};I_j)$ crosses and exits the lower or upper side of $R_k$. Note that
	$$
	\osc(f_{i_0,k},I_j)=\osc_{\text{in}}(f_{i_0,k},I_j)+\osc_{\text{out}}(f_{i_0,k},I_j)\leq 2 \max \{ \osc_{\text{in}}(f_{i_0,k},I_j),\osc_{\text{out}}(f_{i_0,k},I_j) \},
	$$ 
	where 
	$$
	\osc_{\text{in}}(f_{i_0,k},I_j):=\sup\{ |f_{i_0,k}(t)-f_{i_0,k}(s)|: \, \, t,\, s\in I_j \,\, \text{and} \,\, f_{i_0,k}(t),\, f_{i_0,k}(s)\in \Proj_y(R_k) \}
	$$ 
	is the oscillation of $f_{i_0,k}$ over $I_j$ inside $R_k$ and 
	$$
	\osc_{\text{out}}(f_{i_0,k},I_j):=\sup\{ |f_{i_0,k}(t)-f_{i_0,k}(s)|: \, \, t,\, s\in I_j \,\, \text{and} \,\, f_{i_0,k}(t),\, f_{i_0,k}(s)\in \Proj_y(R_k)^\complement \}
	$$ 
	is the oscillation of $f_{i_0,k}$ over $I_j$ outside $R_k$. See Figure \ref{fig:tf_lower_osc}.
	
	\begin{figure}[H]
		\centering
		\includegraphics[width=0.4\textwidth]{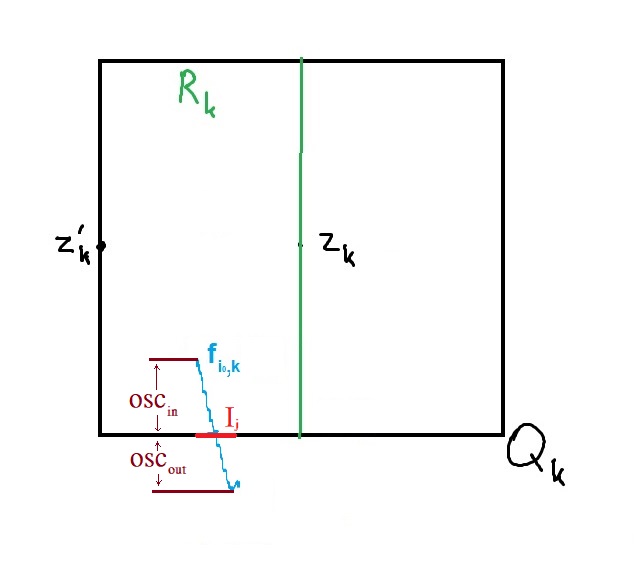}
		\caption{The oscillation of $f_{i_0,k}$ in $I_j$ after $i_0$ reflections of $\Graph(f)$, intersecting $Q_k$.}
		\label{fig:tf_lower_osc}
	\end{figure}
	
	As a result, at least one more reflection is required to construct $f_k$, during which either the part of $\Graph(f_{i_0,k})$ oscillating outside $R_k$ will be reflected inside $R_k$ and exit from the opposite side, in which case 
	\begin{equation}\label{eq:lower_osc_pf_1}
		\osc(f_k,I_j)=\osc_{\text{in}}(f_{i_0+1,k},I_j)=r^\theta\geq r^\alpha,
	\end{equation} 
	or it does not exit $R_k$, in which case
	\begin{equation}\label{eq:lower_osc_pf_2}
		\osc(f_k,I_j)=\max \{ \osc_{\text{in}}(f_{i_0,k},I_j),\osc_{\text{out}}(f_{i_0,k},I_j) \}\geq \osc(f_{i_0,k},I_j)/2.
	\end{equation} 
	However, after $i_0$ reflections of $\Graph(f)$, the part $\Graph(f_{i_0,k};I_j)$ stays isometric to $\Graph(f;I_j)$, which means that \eqref{eq:lower_osc_pf_2} implies
	$$
	\osc(f_k,I_j)\geq \osc(f,I_j)/2\geq c r^\alpha/2
	$$ 
	by \eqref{eq:f_lower_Holder}. Hence, the above inequality and \eqref{eq:lower_osc_pf_1} imply the desired oscillation condition \eqref{eq:tf_lower_osc_int} of $\tf$. 
	
	We now finish the proof by applying a counting argument similar to that in the proof of Theorem \ref{thm:spectrum_bdd_for_Holder}. Split all columns $J_j := I_j \times \R$ into squares of side-length $r$ of the form $[t-r^\theta+jr, t-r^\theta + (j+1) r] \times [\tf(t)-r^\theta+ir, \tf(t)-r^\theta + (i+1) r]$, $ i\in \Z$. In a given column $J_j$, by construction of $\tf$, since it is continuous and $\Graph(\tf;I_j)\subset Q_k$ for all $j$, there will be at least $ \osc(\tf,I_j)/r$ such squares needed to cover $R_k\cap \Graph(\tf)\subset Q_k\cap \Graph(\tf)$. Counting all these squares in columns $J_j$, $i=0, 1, \dots \lfloor r^{\theta-1}-1\rfloor$, we get that
	$$
	N_k \geq \sum_{i=0}^{\lfloor r^{\theta-1}-1\rfloor}\frac{\osc(\tf,I_i)}{r}.
	$$  
	Note that by \eqref{eq:tf_lower_osc_int}, every term of the sum on the right-hand side above is at least $\tilde{c}r^{\alpha-1}$.  Along with $\lfloor r^{\theta-1}\rfloor\geq r^{\theta-1}/2$, the above implies that
	$$
	r^{\theta-1} r^{\alpha-1} \lesssim N_k.
	$$Hence, by $N_k\leq 9N(Q_k\cap \Graph(\tf),r)$ we have shown
	$$
	r^{\alpha+\theta-2}\lesssim N(Q_k\cap \Graph(\tf),r)
	$$ 
	for arbitrary $k\in \N$, as needed to finish the proof. This concludes the proof of Theorem \ref{th:Holder_bd_sharp}. \qed
	
	\begin{remark}
		The function $\tf$ actually depends on the fixed arbitrary $\theta_0\in (0,\alpha)$ we pick in the beginning of the proof. However, that dependence is of no real importance, since $\tf$ is constructed once and satisfies \eqref{eq:Holder_bd_sharp} for all $\theta\in (0,\alpha)$, no matter what $\theta_0$ was initially picked.
	\end{remark}
	
	\begin{remark}
		The above proof in fact shows something stronger than just the indicated statement of Theorem \ref{th:Holder_bd_sharp}. Namely, it provides an algorithm that can be applied to any function $f$ satisfying both \eqref{eq:f_upper_Holder} and \eqref{eq:f_lower_Holder} which potentially increases the Assouad spectrum and dimension of its graph. For instance, while certain Takagi functions do not have graphs of full Assouad dimension (for instance $T_{a,2}$ for $a>1/2$, as studied in \cite{Kaenm_Takagi}), such functions can be modified within countably many squares by following the above process in order to increase the graph's Assouad spectrum (and also Assouad dimension).
	\end{remark}
	
	\begin{remark}
		By countable stability of the Hausdorff dimension (see \cite[p.~49]{FalconerBook}) and the construction of $\tf$, it follows that the Hausdorff dimension of $\Graph(\tf)$ coincides with that of $\Graph(f)$. As a result, the modifications in the proof of Theorem \ref{th:Holder_bd_sharp} can change some notions of dimension (e.g., the Assouad spectrum and dimension) and not others (e.g., the Hausdorff and upper box dimensions).
		
		Note that, at first glance, the reason why the upper box dimension of the graph does not change after the modifications in the proof of Theorem \ref{th:Holder_bd_sharp} might not be obvious, since the upper box dimension is not countably stable. However, a closer look at what these modifications imply for the covering number $N(D(\bz,1)\cap \Graph(\tf), r)$, for arbitrary $\bz\in \Graph(\tf)$ and $r>0$ reveals the reason. We invite the interested reader to fill in the details.
	\end{remark}
	
	While Theorems \ref{thm:spectrum_bdd_for_Holder} and \ref{th:Holder_bd_sharp} do not fully answer the open question \cite[Question 17.11.1]{FraserBook} that partially motivated this paper (namely, the precise value of the Assouad dimension of the graph of either the Weierstrass or Takagi functions), these theorems in turn raise a new question.
	
	\begin{question}\label{qu:WT_fit_in_squares}
		Do there exist choices of $a$ and $b$ with $a\in(0,1)$ and $b\in (1/a, \infty)$ such that either $\dimAreg^\theta \Graph(W_{a,b}) = \frac{2+\log_b(a)-\theta}{1-\theta}$ or $\dimAreg^\theta \Graph(T_{a,b}) = \frac{2+\log_b(a)-\theta}{1-\theta}$ for all relevant values of $\theta$?
	\end{question}
	
	In the context of Question \ref{qu:WT_fit_in_squares}, recall that both $W_{a,b}$ and $T_{a,b}$ are $\alpha$-H\"older continuous with $\alpha = -\log_b(a)$.
	
	\subsection{Lower bounds for the Assouad spectrum of co-H\"older graphs}\label{subsec:lower-bounds}
	
	If the answer to Question \ref{qu:WT_fit_in_squares} is negative, then no sequence of decreasing squares can be found which entirely contains the relevant part of the graph. One naturally wonders whether or not it would suffice to have only a sufficiently large part of the graph inside such squares (in order to obtain a nontrivial lower bound on the Assouad spectrum), or, alternatively, whether some flexibility is allowed in the lower H\"older oscillation condition. The following concept quantifies these ideas.
	
	\begin{definition}\label{def:lower-Holder}
		Let $f:I \to \R$ be defined on an interval $I \subset \R$. Let $\alpha\in(0,1)$ and $\eta,\eps\ge 0$ with $\eta+\eps < 1 - \alpha$. We say that $f$ has {\it uniform $\alpha$-co-H\"older estimates on large subintervals (with parameters $\eta$ and $\eps$)} if there exist a constant $c\in (0,1)$ and sequences $(R_n)$, $R_n<1$, $R_n \searrow 0$, and $(\bz_n)$, $\bz_n \in \Graph(f)$, so that the following two conditions hold true for the sequence of squares $(Q_n)$, $Q_n = Q(\bz_n,R_n)$:
		\begin{itemize}
			\item For each sub-interval $J \subset \Proj_x(Q_n)$,
			$$
			\osc(f,J) \ge c \, |\Proj_x(Q_n)|^\eta \, |J|^\alpha.
			$$
			\item There exist finitely many pair-wise disjoint intervals $I_k= I_k^n\subset \Proj_x(Q_n\cap \Graph(f))$, $k=1, \dots, M(n)$, with $\sum_{k=1}^{M(n)}|I_k|\geq c R_n^{1+\eps}$ and 
			\begin{equation}\label{eq: subset of proj condition}
				\min\{ 
				|I_k|: \, k=1,\dots, M(n) \}\geq c R_n^{1/\theta_0},
			\end{equation}where 
			\begin{equation}\label{eq:theta0}
				\theta_0=\frac{\alpha}{1-\eta-\eps}.
			\end{equation}
		\end{itemize}
	\end{definition}
	
	Informally, a map $f$ has uniform $\alpha$-co-H\"older estimates on large subintervals if the domain of $f$ within the $x$-projection of any square $Q_n$ is sufficiently large in a quantitative way, and on that domain $f$ expands distances by a fixed power factor, with coefficient which is allowed to decay (in a suitable fashion) in terms of the size of $Q_n$. The parameters $\eta$ and $\eps$ govern the relative size of the domain of $f$ and the decay rate of the lower H\"older constant. 
	
	\begin{remark}\label{rem:projections}
		Note that the condition $\sum_{k=1}^{M(n)}|I_k|\geq c R_n^{1+\eps}$ is not truly restrictive, as long as the weaker property $|\Proj_x (Q_n\cap \Graph(f))|\geq c R_n^{1+\eps}$ holds. The reason is we can write the interior of $\Proj_x (Q_n\cap \Graph(f))$ as a countable union of disjoint open intervals $\cup_{k\in \N} I_k$ with $|I_k|$ decreasing in $k$. Then there is finite $\tilde{M}>0$ such that $\sum_{k=1}^{\tilde{M}} |I_k|\geq |\Proj_x (Q_n\cap \Graph(f))|/2\geq c R_n^{1+\eps}/2$. The critical condition in Definition \ref{def:lower-Holder} is the existence of such a finite decomposition of $\Proj_x (Q_n\cap \Graph(f))$ with the property that all intervals have length bounded from below by $R_n^{1/\theta_0}$.
	\end{remark}
	
	We now state and prove the main theorem of this subsection.
	
	\begin{theorem}\label{th:assouad-spectrum-lower-bounds-with-parameters}
		If $f$ has uniform $\alpha$-co-H\"older estimates on large subintervals with parameters $\eta$, $\eps$, then
		$$
		\dimAreg^{\theta_0}\Graph(f)=\dim_A\Graph(f)=2,$$ where $\theta_0$ is as in \eqref{eq:theta0}. Moreover,
		\begin{equation}\label{eq:assouad-spectrum-lower-bounds-with-parameters}
			\dimAreg^\theta(\Graph(f)) \ge \frac{2-\alpha-(1+\eta+\eps)\theta}{1-\theta}
		\end{equation}
		for each $0<\theta\leq\theta_0$.
	\end{theorem}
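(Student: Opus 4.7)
I would first set $s(\theta):=\frac{2-\alpha-(1+\eta+\eps)\theta}{1-\theta}$ and observe, via the substitution $\theta_0=\alpha/(1-\eta-\eps)$, that $s(\theta_0)=2$. Then, by the regularization identity \eqref{eq:regularization} together with the continuity of $s$ on $(0,\theta_0]$, proving the theorem reduces to establishing the pointwise bound
$$
\dim_A^{\theta'}\Graph(f)\ge s(\theta') \qquad \text{for every } \theta'\in(0,\theta_0).
$$
Passing to the supremum yields $\dimAreg^\theta\Graph(f)\ge s(\theta)$ for all $\theta\le\theta_0$; at $\theta=\theta_0$ this forces $\dimAreg^{\theta_0}\Graph(f)=\dim_A\Graph(f)=2$ via the ambient-dimension upper bound.

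\textbf{Counting step.} To prove the pointwise claim, I would fix $\theta'\in(0,\theta_0)$ and test the spectrum at the scales $R=R_n$, $r=r_n:=R_n^{1/\theta'}$ centered at $\bz_n$. Since $1/\theta'>1/\theta_0$ and $R_n\searrow 0$, for $n$ large one has $r_n\le cR_n^{1/\theta_0}\le\min_k|I_k^n|$, so each $I_k^n$ admits a pairwise-disjoint partition into $\gtrsim|I_k^n|/r_n$ sub-intervals of length $r_n$. On any such sub-interval $J$, the $\alpha$-co-H\"older bound yields $\osc(f,J)\gtrsim R_n^\eta r_n^\alpha$, so $\Graph(f\vert_J)$ meets at least $\gtrsim R_n^\eta r_n^{\alpha-1}$ squares of side $r_n$ in any cover. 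A covering square has horizontal extent $r_n$ and hence straddles at most two consecutive vertical $r_n$-strips, so summing the strip-wise lower bounds over all the chosen sub-intervals and invoking $\sum_k|I_k^n|\gtrsim R_n^{1+\eps}$ produces
$$
N(Q_n\cap \Graph(f),r_n)\gtrsim R_n^\eta r_n^{\alpha-2}\sum_k|I_k^n|\gtrsim R_n^{1+\eta+\eps}\,r_n^{\alpha-2}.
$$

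\textbf{Exponent matching and principal obstacle.} For any $s'<s(\theta')$, substituting $r_n=R_n^{1/\theta'}$ into the quotient $N(Q_n\cap\Graph(f),r_n)/(R_n/r_n)^{s'}$ collapses the exponent of $R_n$ to $(1-\theta')(s'-s(\theta'))/\theta'$, which is strictly negative by the definition of $s(\theta')$. Hence the ratio is unbounded in $n$, forcing $\dim_A^{\theta'}\Graph(f)\ge s'$, and letting $s'\nearrow s(\theta')$ completes the argument. The main obstacle is the tight interplay between the two size constraints on the fine scale: the Assouad-spectrum definition requires $r_n\le R_n^{1/\theta'}$, while the partitioning of $I_k^n$ requires $r_n\lesssim\min_k|I_k^n|\sim R_n^{1/\theta_0}$. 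These constraints coexist with room to spare in the open range $\theta'<\theta_0$ but collapse to equality at $\theta=\theta_0$, which is precisely why the argument must be organized to prove the lower bound strictly inside $(0,\theta_0)$ and then recover the endpoint value through \eqref{eq:regularization} rather than attacking $\theta_0$ head-on.
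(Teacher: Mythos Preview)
Your proposal is correct and follows essentially the same column-oscillation counting argument as the paper. The only organizational difference is at the endpoint $\theta_0$: the paper attacks $\dimAreg^\theta$ directly on the closed range $(0,\theta_0]$ by setting $r_n:=\min\{R_n^{1/\theta},\min_k|I_k^n|\}$ (so that $cR_n^{1/\theta}\le r_n\le R_n^{1/\theta}$, which is admissible for the \emph{regularized} spectrum) and then dealing with the possibly short boundary pieces $J_m$ of length $<r_n/2$, whereas you restrict to $\theta'<\theta_0$ so that $r_n=R_n^{1/\theta'}\le\min_k|I_k^n|$ for large $n$, avoiding that bookkeeping, and then recover $\theta=\theta_0$ via \eqref{eq:regularization} and the monotonicity of $s$.
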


	\begin{proof}
		Fix $\theta\in (0,\alpha(1-\eta-\eps)^{-1}]$ and
		\begin{equation}\label{eq:gamma-choice}
			\gamma < \frac{2 - \alpha - (1+\eta+\eps)\theta}{1-\theta}.
		\end{equation}
		The statements follow if we prove that $\dimAreg^\theta(\Graph(f)) > \gamma$.
		
		To this end,  let $(\bz_n)$, $(R_n)$, and $(Q_n)$ be as in Definition \ref{def:lower-Holder}, with $R_n < 1$. 
		Fix $n\in \N$ and using the intervals $I_k$ as in Definition \ref{def:lower-Holder} set 
		$$
		r_n:=\min\left\{ R_n^{1/\theta},  \min\{ 
		|I_k|: \, k=1,\dots, M(n) \}  \right\}.
		$$ 
		Since $R_n<1$ and $\theta\leq  \alpha(1-\eta-\eps)^{-1}=\theta_0$, we have that $R_n^{1/\theta}\leq R_n^{1/\theta_0}$. In view of \eqref{eq: subset of proj condition} we conclude
		\begin{equation}\label{eq: r_n for lower estimate}
			c R_n^{1/\theta}\leq r_n \leq R_n^{1/\theta},
		\end{equation} 
		and we use the sequence of scaling pairs $r_n,R_n$ to show the desired lower bound on the regularized Assouad spectrum of the graph. 
		
		To bound the regularized spectrum below by $\gamma$, it is enough to show that 
		$$
		N(Q_n\cap \Graph(f), r_n)\gtrsim\left( \frac{R_n}{r_n} \right)^\gamma.
		$$ 
		For any $j=1,\ldots, 2\ceil{R_n/r_n}$, a column of the form $C_j := [x_n-R_n+(j-1)r_n, x_n-R_n +jr_n]\times \R$ might, or might not intersect the graph of $f$ above an interval $I_k$. 
		Consider all of the non-empty intervals obtained as intersections $C_j \cap I_k$ for $k=1,\ldots,M(n)$ and $j=1,\ldots,2\ceil{R_n/r_n}$, and let $J_m$, $m = 1,\ldots,N$, be an enumeration of these intervals, ordered according to the natural order on the $x$-axis. Given such an interval $J_m$ we define $j_m$ and $k_m$ so that $J_m = C_{j_m} \cap J_{k_m}$. 
		Note that
		$$
		\sum_{m=1}^N |J_m| =\sum_{k=1}^{M(n)} |I_k|\geq c R_n^{1+\eps};
		$$
		since $|J_m|\leq r_n$ we conclude that
		\begin{equation}\label{eq: est on N}
			N\gtrsim \frac{R_n^{1+\eps}}{r_n}.
		\end{equation}
		Since $r_n \le |I_k|$ for each $k$, 
		we deduce that if $|J_m|<r_n/2$, then either
		$C_{j_m-1}$ or $C_{j_m+1}$ intersects $I_{k_m}$ and
		$$
		\max\{|J_{m-1}|, |J_{m+1}|\}\geq \frac{r_n}{2},
		$$
		where we adopt the convention that $J_{0}=J_{N+1}=\emptyset$. The preceding fact implies that the number of intervals $J_m$ with length at least $r_n/2$, denoted $\tilde{N}$, cannot be less than $N/6$. Fix the maximal sub-collection of intervals $J_m$ that have length at least $r_n/2$ and relabel them again with index $m$ to ease notation. Then, using only sub-squares of $Q_n$ of side-length $r_n$ that lie over such intervals $J_m$, $m=1,\dots, \tilde{N}$, to cover $Q_n\cap \Graph(f)$, we have that
		$$
		N(Q_n\cap \Graph(f),r_n)\geq \sum_{m=1}^{\tilde{N}} \frac{\osc(f,J_m)}{r_n}\gtrsim \sum_{m=1}^{\tilde{N}} \frac{R_n^\eta |J_m|^\alpha}{r_n}.
		$$ 
		But recall that $|J_m|\geq r_n/2$ and $\tilde{N}\geq N/6\gtrsim R_n^{1+\eps}/r_n$ by \eqref{eq: est on N}. Along with the above inequality, these imply that
		$$
		N(Q_n\cap \Graph(f),r_n)\gtrsim \frac{R_n^{1+\eta+\eps}}{r_n^{2-\alpha}}.
		$$ 
		To complete the proof we need to show that
		\begin{equation}\label{eq:end-of-the-proof}
			\frac{R_n^{1+\eta+\eps}}{r_n^{2-\alpha}}\gtrsim \left( \frac{R_n}{r_n} \right)^\gamma.
		\end{equation}
		For sufficiently large $n$, \eqref{eq:end-of-the-proof} follows from \eqref{eq:gamma-choice}, \eqref{eq: r_n for lower estimate}, and the bounds $\theta \le \theta_0$, $\eta+\eps < 1-\alpha$. This completes the proof of Theorem
		\ref{th:assouad-spectrum-lower-bounds-with-parameters}.
	\end{proof}
	
	\begin{remarks}\label{rem:assouad-spectrum-lower-bounds-with-parameters}
		\begin{itemize}
			\item[(i)] Suppose $f$ is a function as in Definition \ref{def:lower-Holder}, but with $\theta_0$ in \eqref{eq: subset of proj condition} replaced by some smaller value $\tilde{\theta}\in (0,\theta_0)$. A closer analysis of the proof of Theorem \ref{th:assouad-spectrum-lower-bounds-with-parameters} shows that the estimate
			$$
			\dimAreg^\theta (\Graph(f))\ge \frac{2-\alpha-(1+\eta+\eps)\theta}{1-\theta}
			$$ 
			then holds for all $\theta\in (0,\tilde{\theta})$.
			\item [(ii)] If $f$ is $\alpha$-H\"older and has uniform $\alpha$-co-H\"older estimates on large subintervals with parameters $\eta = \eps = 0$, then the lower bound in \eqref{eq:assouad-spectrum-lower-bounds-with-parameters} coincides with the upper bound in Theorem \ref{thm:spectrum_bdd_for_Holder} and we have a formula for the regularized Assouad spectrum of $\Graph(f)$. This case corresponds to the situation in which $f$ has the lower oscillation estimate \eqref{eq:f_lower_Holder} and a definite proportion of the graph of $f$ over $\Proj_x(Q_n)$ lies within $Q_n$, when measured with respect to the length measure on $\Proj_x(Q_n)$ in the quantitative way outlined in Definition \ref{def:lower-Holder}.
			\item[(iii)] The restriction $\eta+\eps<1-\alpha$ implies that the lower bound in \eqref{eq:assouad-spectrum-lower-bounds-with-parameters} strictly exceeds $2-\alpha$. If $f$ is also $\alpha$-H\"older, the oscillation condition of Definition \ref{def:lower-Holder} is enough to ensure that $\ovdimB\Graph(f)=2-\alpha$. In this case, the lower bound on the regularized spectrum of $\Graph(f)$ is non-trivial (i.e., exceeds the box dimension of $\Graph(f)$) for all $\theta>0$. 
			\item [(iv)] The Takagi functions studied in \cite{Kaenm_Takagi}, namely $T_{a,2}$ for $a>1/2$, do not satisfy uniform co-H\"older estimates as in Definition \ref{def:lower-Holder} with parameters $\eta$ and $\eps$ such that $\eta+\eps < 1 - \alpha = 1 + \log_2(a)$. Indeed, if they did, then their graphs would have Assouad dimension equal to $2$ by Theorem \ref{th:assouad-spectrum-lower-bounds-with-parameters}, which was shown not to be the case in \cite{Kaenm_Takagi}.
		\end{itemize}
	\end{remarks}
	
	We emphasize that Remark \ref{rem:assouad-spectrum-lower-bounds-with-parameters}(iv) does not prevent other Takagi or Weierstrass functions from satisfying such estimates, and especially for $T_{a,b}$ with $b$ non-integer. While it is known that there exist choices of $a$, $b$ and $\theta$ for which $\ovdimB \Graph(T_{a,b})<\dim_A^\theta \Graph(T_{a,b})$ (see  \cite[Remark 1.2]{TakagiGraphYu}), the following weaker version of Question \ref{qu:WT_fit_in_squares} remains open.
	
	\begin{question}\label{Que: weak lower bound T and W}
		Does there exist any choice of $a$ and $b$ so that either $T_{a,b}$ or $W_{a,b}$ has graph with (regularized) Assouad spectrum satisfying the lower bound \eqref{eq:assouad-spectrum-lower-bounds-with-parameters}?
	\end{question}
	
	There are other examples of H\"older functions that could potentially satisfy uniform co-H\"older estimates. Hildebrand and the first author \cite{ChronAJi} study a class of fractal Fourier series of the form $F(t):=\sum_{n=1}^\infty \frac{f(n) e^{2\pi i n^k t}}{n^p}:[0,1]\rightarrow \C$ for positive $p>0$, $k\in \N$, where $f:\N\rightarrow \C$ satisfies a certain exponential sum growth condition. These are H\"older functions whose real and imaginary part have graphs that exhibit fractal behavior. In fact, Weierstrass functions belong in this class (as the real part of such series) and establish the sharpness of the box dimension bounds proved for the graphs of these fractal Fourier series (see \cite{ChronAJi}). It is possible that suitable conditions on the coefficients $f(n)$ may yield further examples of graphs satisfying \eqref{eq:Holder_bd_sharp} or \eqref{eq:assouad-spectrum-lower-bounds-with-parameters}.
	
	\section{Dimensions of Sobolev graphs}\label{sec:Sobolev}
	
	In this section we estimate the Assouad spectra of graphs of Sobolev functions. In particular, we prove Theorems \ref{th:ass-spectrum-sobolev} and \ref{th:ass-spectrum-sobolev-2}.
	
	First, we remark that box counting dimensions of graphs of Sobolev (and more generally, BV) functions are well understood. In fact, the following is a known result.
	
	\begin{theorem}\label{th:ubd-bv}
		Let $f \in BV(I)$. Then $\ubdim(\Graph(f)) = 1$.
	\end{theorem}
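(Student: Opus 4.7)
The plan is to establish $\ubdim(\Graph(f)) = 1$ by proving matching upper and lower bounds.

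For the lower bound, I would simply observe that the coordinate projection $\Proj_x:\R^2\to\R$ is $1$-Lipschitz, so it does not increase upper box dimension. Since $\Proj_x(\Graph(f)) = I$ and $\ubdim(I)=1$, we immediately get $\ubdim(\Graph(f))\ge 1$.

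The upper bound is the content of the theorem, and it relies on translating the BV hypothesis into a summable oscillation estimate. Fix $r \in (0,1)$ and partition $I$ into $M := \lceil |I|/r\rceil$ consecutive subintervals $I_1,\ldots,I_M$ of length at most $r$. As in the proof of Theorem \ref{thm:spectrum_bdd_for_Holder}, the portion of $\Graph(f)$ lying above $I_i$ can be covered by at most $\osc(f,I_i)/r + 2$ axis-aligned closed squares of side length $r$, so
\begin{equation*}
N(\Graph(f),\sqrt{2}\,r) \;\le\; \sum_{i=1}^M \left(\frac{\osc(f,I_i)}{r} + 2\right) \;=\; \frac{1}{r}\sum_{i=1}^M \osc(f,I_i) \;+\; 2M.
\end{equation*}
The key observation is that $\sum_{i=1}^M \osc(f,I_i) \le V(f)$, where $V(f)$ denotes the total variation of $f$ on $I$. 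To see this, for each $i$ choose $s_i,t_i \in I_i$ with $s_i<t_i$ so that $|f(s_i)-f(t_i)|$ approximates $\osc(f,I_i)$, and assemble these points (in their natural order on $I$) into a common partition of $I$; the sum $\sum_i |f(s_i)-f(t_i)|$ is then dominated by the variation computed on this partition, hence by $V(f)$. Taking suprema recovers the claim.

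Combining these estimates, $N(\Graph(f),\sqrt{2}\,r) \lesssim V(f)/r + |I|/r \lesssim 1/r$, which yields $\ubdim(\Graph(f)) \le 1$. The only step that requires any thought is the oscillation-vs-variation inequality, but this is standard and essentially built into the definition of $\BV$; I do not foresee a serious obstacle. The two inequalities together give the theorem.
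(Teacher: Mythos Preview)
Your argument is correct and gives a clean, self-contained proof: the lower bound via the $1$-Lipschitz projection is immediate, and for the upper bound the covering estimate combined with the inequality $\sum_i \osc(f,I_i) \le V(f)$ is exactly the right tool. The oscillation-vs-variation inequality is justified as you indicate, since the points $s_i < t_i$ chosen in the pairwise-disjoint intervals $I_i$ assemble into an ordered partition of $I$, and $\sum_i |f(t_i)-f(s_i)|$ is a subsum of the consecutive differences along that partition.

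The paper, however, takes a genuinely different route. It invokes the Jordan decomposition $f = g - h$ with $g,h$ monotone, then proves $\ubdim(\Graph(g)) = 1$ for monotone $g$ by a geometric trick (a rotation by $\pi/4$ turns the graph of a monotone function into the graph of a $1$-Lipschitz function), and finally appeals to a separate lemma asserting that upper box dimension is stable under graph sums: $\ubdim(\Graph(g+h)) \le \max\{\ubdim(\Graph(g)),\ubdim(\Graph(h))\}$. Your approach is more direct and avoids both the decomposition and the auxiliary lemma; it goes straight from the definition of total variation to the covering bound. The paper's detour, on the other hand, isolates the graph-sum stability lemma as a standalone fact, which the paper later exploits to show (by contrast) that Assouad dimension and the Assouad spectra are \emph{not} stable under graph sums. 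So the paper's longer argument buys a structural statement of independent interest, while yours is the shortest path to the theorem itself.
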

	
	We sketch a proof of Theorem \ref{th:ubd-bv}.\footnote{While we did not locate a published version of this proof in the literature, it can be found at {\tt https://mathoverflow.net/questions/304573/hausdorff-dimension-of-the-graph-of-an-increasing-function}. For an alternate proof, see {\tt https://mathoverflow.net/questions/327698/hausdorff-dimension-of-the-graph-of-} {\tt a-bv-function-in-1-dimensional-setting}.} Recall that $f \in \BV(I)$ if and only if $f=g-h$ for two monotone functions $g$ and $h$ on $I$. Thus the first step of the proof of Theorem \ref{th:ubd-bv} is to show that $\ubdim(\Graph(g)) = 1$ for any monotone function $g:I \to \R$. To see why this is true, observe that a clockwise rotation of $\R^2$ by angle $\pi/4$ exhibits the graph of $g$ as geometrically congruent to the graph of a $1$-Lipschitz function $h$ defined on a new interval $I'$. Since $\Graph(h) = H(I')$ for the function $H(x) := (x,h(x))$, and $H$ is a bi-Lipschitz embedding of $I'$ into $\R^2$, we conclude that
	$\ubdim(\Graph(g)) = \ubdim(\Graph(h)) = \ubdim(H(I')) = \ubdim(I') = 1$ as desired.
	
	\begin{remark}
		Observe that the same argument in fact shows that $\Graph(g)$ has Assouad dimension (and all Assouad spectra) equal to $1$ whenever $g$ is monotone.
	\end{remark}
	
	The next step in the proof is to relate the dimension of the graph of a sum of two functions to the dimensions of the graphs of the addends.
	
	\begin{lemma}\label{lem:ubd-sum}
		$\ubdim(\Graph(g+h)) \le \max\{ \ubdim(\Graph(g)), \ubdim(\Graph(h)) \}$ for functions $g,h:I \to \R$.
	\end{lemma}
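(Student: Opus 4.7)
The plan is a column-counting argument closely analogous to the one used in the proof of Theorem \ref{thm:spectrum_bdd_for_Holder}. For a fixed small $r \in (0,1)$, I partition the domain $I$ into $M \approx |I|/r$ essentially disjoint subintervals $I_1,\ldots,I_M$ of length $r$. For any function $\phi: I \to \R$, the part of $\Graph(\phi)$ lying over the column $J_i = I_i\times\R$ is contained in a closed rectangle of width $r$ and height $\osc(\phi, I_i)$, so it can be covered by at most $\osc(\phi,I_i)/r+2$ axis-aligned closed squares of side $r$. Summing over $i$ yields the covering upper bound
\[
N(\Graph(\phi),r) \,\le\, \frac{1}{r}\sum_{i=1}^M \osc(\phi, I_i) + 2M.
\]

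The key analytic input is the elementary subadditivity $\osc(g+h, I_i) \le \osc(g,I_i) + \osc(h,I_i)$. Applying the preceding upper bound with $\phi = g+h$ and splitting the resulting sum gives
\[
N(\Graph(g+h),r) \,\lesssim\, \frac{1}{r}\sum_i \osc(g, I_i) + \frac{1}{r}\sum_i \osc(h, I_i) + M.
\]
A parallel grid-count in the reverse direction (again by considering each column $J_i$ separately) shows $N(\Graph(g),r) \gtrsim \tfrac{1}{r}\sum_i \osc(g, I_i)$, since any cover of $\Graph(g)$ by squares of side $r$ must, column-by-column, contain enough squares vertically stacked to span the local oscillation; the analogous lower bound holds for $h$. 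Consequently
\[
N(\Graph(g+h),r) \,\lesssim\, N(\Graph(g),r) + N(\Graph(h),r) + 1/r.
\]

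To finish, I observe that any graph of a real-valued function on the interval $I$ projects onto $I$ under the $1$-Lipschitz map $\Proj_x$, so $N(\Graph(g),r) \ge N(I,r) \gtrsim 1/r$; the additive error term $1/r$ is therefore absorbed into either of the first two summands on the right, leaving
\[
N(\Graph(g+h),r) \,\lesssim\, \max\{N(\Graph(g),r), N(\Graph(h),r)\}.
\]
Taking logarithms, dividing by $\log(1/r)$, and passing to $\limsup$ as $r \searrow 0$, together with the elementary identity $\limsup_{r\to 0}\max\{a_r,b_r\} = \max\{\limsup a_r, \limsup b_r\}$, yields the asserted dimension inequality. There is no substantive obstacle in this plan; the only mild care required is in bookkeeping the additive column-count error $2M$ and confirming it is dominated by the trivial projection lower bound on each addend's covering number, so that it does not inflate the final dimension bound.
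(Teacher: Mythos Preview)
Your argument is correct and shares with the paper the single analytic input, namely subadditivity of oscillation $\osc(g+h,J)\le\osc(g,J)+\osc(h,J)$, but it is organized differently. The paper does not argue directly with covering numbers; instead it records the reformulation \eqref{eq:ubdimGraph-g-equivalence} of $\ubdim(\Graph(\phi))$ as the infimal exponent $p$ for which one can partition $I$ into at most $C\delta^{-p}$ subintervals of length $\le\delta$ with $\osc(\phi,\cdot)\le\delta$ on each piece, and then reads off the lemma by refining a $g$-partition and an $h$-partition to a common one. Your route bypasses that characterization entirely: you sandwich $N(\Graph(\phi),r)$ between multiples of $\tfrac1r\sum_i\osc(\phi,I_i)$ (plus $M$) and deduce the covering-number comparison $N(\Graph(g+h),r)\lesssim N(\Graph(g),r)+N(\Graph(h),r)$ directly. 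This is arguably more elementary and more transparent about constants, at the price of one extra step---the lower bound $N(\Graph(g),r)\gtrsim\tfrac1r\sum_i\osc(g,I_i)$---which the paper's formulation avoids. Note that this lower bound tacitly uses continuity of $g$ (the intermediate value theorem is what guarantees the $y$-projection of $\Graph(g;I_i)$ is the full interval of length $\osc(g,I_i)$); the paper's identity \eqref{eq:ubdimGraph-g-equivalence} has the same implicit hypothesis, so this is not a discrepancy, but it is worth making explicit.
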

	
	We call a notion of dimension $D$  {\it stable with respect to graph sums} if $$D(\Graph(g+h)) \le \max \{D(\Graph(g)),D(\Graph(h))\},$$ where $g$ and $h$ are continuous functions defined on a common interval $I$. Thus Lemma \ref{lem:ubd-sum} asserts that box-counting dimension is stable with respect to graph sums.
	
	For a proof of Lemma \ref{lem:ubd-sum}, see e.g.\ \cite[Lemma 2.1]{falconer-fraser}. Another approach\footnote{We learned of this proof at {\tt https://mathoverflow.net/questions/331714/hausdorff-dimension-of-the-graph-} {\tt of-the-sum-of-two-continuous-functions}.} is to establish the identity
	\begin{equation}\label{eq:ubdimGraph-g-equivalence}
		{\footnotesize \ubdim(\Graph(g)) = \inf \left\{ p>0 \, : \, {\mbox{$\exists\,C>0$ such that $\forall\,\delta>0$ there is a partition $I = I_1\cup\cdots\cup I_N$} \atop \mbox{into subintervals with $N \le C\delta^{-p}$, $|I_j| \le\delta$, and $\osc(g,I_j) \le \delta$ for all $j$}} \right\},}
	\end{equation}\normalsize
	after which the desired conclusion follows from the subadditivity of oscillation.
	
	\begin{remark}
		The preceding argument does not apply to either the Assouad spectra or the Assouad dimension, since a corresponding formula analogous to \eqref{eq:ubdimGraph-g-equivalence} does not hold. We therefore cannot conclude by such methods either that these Assouad-type dimensions are stable with respect to graph sums, nor that the dimensions of BV graphs are equal to one. In fact, we will argue in the opposite direction as a consequence of Theorem \ref{th:ass-spectrum-sobolev-2} and deduce that these Assouad-type dimensions are not stable with respect to graph sums. See Corollary \ref{cor:dim_A_theta_not_graph_sum_stable}.
	\end{remark}
	
	With an eye towards the proof of Theorem \ref{th:ass-spectrum-sobolev-2}, we introduce an example.\footnote{We take this opportunity to acknowledge that a similar construction was previously known to Fraser as a counterexample to the stability of Assouad dimension under graph sums. In Remark \ref{rem:failure-stability-assouad-dimension-graphs} we recover the latter observation by a slightly different line of reasoning.}
	
	\begin{example}\label{ex:w1pex}
		Fix a decreasing sequence $(a_m)$ with $a_m \searrow 0$, and define $f:[0,a_1] \to \R$ as follows: $f(0) = 0$ and
		\begin{equation}\label{eq:f-defn}
			f(x) = \begin{cases} - a_{2\ell} + \frac{a_{2\ell} + a_{2\ell+1}}{a_{2\ell} - a_{2\ell +1}} (a_{2\ell}-x), &\mbox{if $x\in[a_{m+1},a_m]$ and $m=2\ell$ is even,} \\ a_{2\ell-1} - \frac{a_{2\ell-1} + a_{2\ell}}{a_{2\ell-1} - a_{2\ell}} (a_{2\ell-1}-x), &\mbox{if $x\in[a_{m+1},a_m]$ and $m=2\ell-1$ is odd.}
			\end{cases}
		\end{equation}
		Observe that the graph of $f$ is a piecewise linear curve consisting of line segments joining the points
		\begin{equation}\label{eq:pm}
			\bz_m := (a_m, (-1)^m a_m), \qquad m=1,2,\ldots
		\end{equation}
		in succession.
		
		We are interested in characterizing when $f$ lies in a suitable Sobolev space $W^{1,p}$, $1\le p\le\infty$. Towards this end, note that 
		$$
		|f'(x)| = \sum_{m=1}^\infty \frac{a_{m}+a_{m+1}}{a_{m}-a_{m+1}} \chi_{\{x:a_{m+1}<x<a_{m}\}},
		$$
		and hence the {\it $p$-energy} of $f$ is
		$$
		E_p(f)^p := \int_{[0,a_1]} |f'(x)|^p \, dx = \sum_{m=1}^\infty \frac{(a_{m}+a_{m+1})^p}{(a_{m}-a_{m+1})^{p-1}}.
		$$
		Thus the membership of $f$ in a specific Sobolev class $W^{1,p}$ depends on the relative rates of convergence to zero of $(a_m)$ and $(\eps_m)$, where
		$$
		\eps_m := a_{m} - a_{m+1}.
		$$
		In the proof of Theorem \ref{th:ass-spectrum-sobolev-2} we will focus on a specific example, for which we will compute the sharp range of Sobolev membership and the precise Assouad spectrum of the graph.
	\end{example}
	
	We leave the proof of the following assertion as an exercise for the reader.
	
	\begin{claim}\label{claim-assouad-dimension}
		If $a_{m+1}/a_m \to 1$ as $m\to \infty$, then $\Graph(f)$ has a full quarter-plane of $\R^2$ as a weak tangent (when considering blow-ups centered at the origin $(0,0)$). Hence, in this situation, we have
		\begin{equation}\label{eq:assouad-dimension-equals-2}
			\dim_A(\Graph(f)) = 2;
		\end{equation}
		see \cite[Theorem 5.1.3]{FraserBook} for a proof of the fact that the Assouad dimension of a set $E$ is equal to the supremum of the Hausdorff dimensions of all of the weak tangents of $E$.
	\end{claim}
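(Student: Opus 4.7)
The plan is to exhibit the closed cone $Q = \{(x,y) \in \R^2 : x \ge 0, \, |y| \le x\}$ (restricted to any compact window) as a weak tangent of $\Graph(f)$ at the origin, and then apply the quoted result \cite[Theorem 5.1.3]{FraserBook} which bounds the Assouad dimension below by the Hausdorff dimension of any weak tangent. Since $Q \cap \overline{D(0,1)}$ has nonempty interior in $\R^2$ and therefore Hausdorff dimension $2$, this yields $\dim_A(\Graph(f)) \ge 2$; the reverse inequality $\dim_A(\Graph(f)) \le 2$ is trivial.

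To set up the weak tangent I would use the linear rescalings $T_m(\bz) := a_m^{-1}\bz$, which fix the origin, and study the sets $E_m := T_m(\Graph(f)) \cap \overline{D(0,C)}$ for a fixed but arbitrary $C>0$. The containment $E_m \subseteq Q \cap \overline{D(0,C)}$ follows directly from the construction of $f$: the graph is piecewise linear with vertices $\bz_k$ situated on the two lines $y = \pm x$, so $|f(x)| \le x$ on $[0,a_1]$, and this inequality is preserved by the rescaling.

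The heart of the argument is to show the reverse inclusion, namely that every point $(x_0,y_0) \in Q \cap \overline{D(0,C)}$ is a Hausdorff limit of points in $E_m$. Given $\eps > 0$, the hypothesis $a_{k+1}/a_k \to 1$ lets me pick, for all sufficiently large $m$, the smallest index $k = k(m) \ge m$ with $a_k \le x_0 \, a_m$; then $a_{k-1}/a_m > x_0$ and $a_k/a_{k-1} > 1 - \eps$, so $a_k/a_m \in ((1-\eps)x_0, \, x_0]$. Now the linear segment of $\Graph(f)$ joining $\bz_k$ and $\bz_{k+1}$, after applying $T_m$, lies in a vertical strip of $x$-width at most $\eps$ centered near $x_0$, while its $y$-coordinate sweeps linearly between the two endpoint values $\pm a_k/a_m$ and $\mp a_{k+1}/a_m$ (which differ in sign). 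That swept interval approximates $[-x_0, x_0]$ to within error $O(\eps x_0)$, hence contains a point with $y$-coordinate within $O(\eps x_0)$ of $y_0$. Combined with the containment observation, this yields $E_m \to Q \cap \overline{D(0,C)}$ in the Hausdorff metric, identifying $Q \cap \overline{D(0,C)}$ as a weak tangent of $\Graph(f)$ and completing the proof via \cite[Theorem 5.1.3]{FraserBook}.

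The main technical obstacle is the density step: one must verify that the rescaled peaks $\{a_k/a_m\}_{k \ge m}$ form a sufficiently fine net in $(0,1]$ and that the nearly vertical segments between consecutive peaks truly sweep the full $y$-range $[-x_0, x_0]$ in the limit. Both features are encoded in the single hypothesis $a_{k+1}/a_k \to 1$, which must be used carefully (and uniformly in $k \ge m$) to control the approximation errors as $m \to \infty$. Everything else is bookkeeping.
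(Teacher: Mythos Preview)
The paper leaves this claim as an exercise and supplies no proof, so there is nothing to compare against directly. Your approach---rescaling by $a_m^{-1}$, observing that convexity of the cone $Q=\{(x,y):x\ge 0,\ |y|\le x\}$ forces $E_m\subset Q$, and then using the hypothesis $a_{k+1}/a_k\to 1$ to make the rescaled vertex abscissae $\{a_k/a_m\}$ dense while the intervening segments sweep the full vertical range---is precisely the natural argument the authors presumably had in mind, and it is correct.

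One small wrinkle: your index choice ``smallest $k\ge m$ with $a_k\le x_0 a_m$'' only treats $x_0\in(0,1]$; for $x_0>1$ you would need $k<m$, and the inequality $a_{k-1}/a_m>x_0$ fails as written (since $a_{m-1}/a_m\to 1$). This is harmless for the dimension conclusion---taking $C=1$ already gives a weak tangent with nonempty interior---and for general $C$ it is patched by instead choosing the largest $k$ with $a_k\ge x_0 a_m$ and noting that this $k$ still tends to $\infty$ with $m$, so the ratio hypothesis continues to apply.
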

	
	\begin{remark}\label{rem:failure-stability-assouad-dimension-graphs}
		Note that it already follows from \eqref{eq:assouad-dimension-equals-2} that Assouad dimension is not stable with respect to graph sums. Indeed, there exist functions $f$ of the form in \eqref{eq:f-defn} which lie in the BV class; then $f=g-h$ for monotone $g$ and $h$ but $\assdim(\Graph(f))=2$ while $\assdim(\Graph(g)) = \assdim(\Graph(h)) = 1$.
	\end{remark}
	
	Applying the argument from Remark \ref{rem:failure-stability-assouad-dimension-graphs}, but now in the context of Theorem \ref{th:ass-spectrum-sobolev-2}, yields the following corollary.
	
	\begin{corollary}\label{cor:dim_A_theta_not_graph_sum_stable}
		For any $0<\theta<1$, the Assouad spectrum $\dim_A^\theta$ is not stable with respect to graph sums.
	\end{corollary}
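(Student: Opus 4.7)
The plan is to mirror the strategy used for the Assouad dimension in Remark \ref{rem:failure-stability-assouad-dimension-graphs}, but with Theorem \ref{th:ass-spectrum-sobolev-2} as the source of the counterexample. Fix $\theta_0 \in (0,1)$ and choose $p \in (1,\infty)$ large enough that $\theta_0 < p/(p+1)$ (any $p > \theta_0/(1-\theta_0)$ works). Theorem \ref{th:ass-spectrum-sobolev-2} then furnishes $f \in W^{1,p}([0,1])$ with $\dimAreg^\theta(\Graph(f)) = 1 + \theta/((1-\theta)p)$ throughout $(0, p/(p+1))$. Since $W^{1,p}([0,1]) \subset \BV([0,1])$, the Jordan decomposition yields monotone continuous $g,h:[0,1]\to\R$ with $f = g-h$; setting $h' := -h$, which is again monotone, gives $f = g + h'$. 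By the remark immediately following Theorem \ref{th:ubd-bv}, both $\dim_A^{\theta_0}(\Graph(g))$ and $\dim_A^{\theta_0}(\Graph(h'))$ equal $1$.

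The main obstacle is that Theorem \ref{th:ass-spectrum-sobolev-2} controls only the regularized spectrum $\dimAreg^\theta$, whereas the corollary is stated in terms of $\dim_A^{\theta_0}$ itself. To bridge this gap, I would show that $\dim_A^\theta(\Graph(f)) = \dimAreg^\theta(\Graph(f))$ for every $\theta \in (0, p/(p+1))$. The key inputs are the continuity of $\theta \mapsto \dim_A^\theta(\Graph(f))$ (recorded in Section \ref{sec:background}) and the strict monotonicity of the explicit formula for $\dimAreg^\theta(\Graph(f))$. For any $\theta_1 < \theta_2$ in the range, the identity \eqref{eq:regularization} combined with $\dimAreg^{\theta_2} > \dimAreg^{\theta_1}$ forces the existence of some $\theta' \in [\theta_1, \theta_2)$ with $\dim_A^{\theta'}(\Graph(f)) > \dimAreg^{\theta_1}(\Graph(f))$; otherwise the two suprema defining $\dimAreg^{\theta_1}$ and $\dimAreg^{\theta_2}$ would agree. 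Letting $\theta_2 \searrow \theta_1$ produces a sequence $\theta'_n \to \theta_1$ witnessing this bound, and continuity of $\dim_A^\theta$ at $\theta_1$ then gives $\dim_A^{\theta_1}(\Graph(f)) \geq \dimAreg^{\theta_1}(\Graph(f))$; the reverse inequality is automatic.

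With the upgrade in hand, $\dim_A^{\theta_0}(\Graph(f)) = 1 + \theta_0/((1-\theta_0)p) > 1$, so
\[
\dim_A^{\theta_0}(\Graph(g + h')) = \dim_A^{\theta_0}(\Graph(f)) > 1 = \max\{\dim_A^{\theta_0}(\Graph(g)),\,\dim_A^{\theta_0}(\Graph(h'))\},
\]
contradicting stability of $\dim_A^{\theta_0}$ with respect to graph sums. Since $\theta_0 \in (0,1)$ was arbitrary, this settles the corollary. The hard part is precisely the regularized-versus-unregularized distinction addressed in the middle paragraph; once this is cleared, the rest reduces to the Jordan-decomposition trick already used in Remark \ref{rem:failure-stability-assouad-dimension-graphs}.
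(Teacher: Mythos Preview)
Your overall strategy is exactly the paper's: invoke Theorem \ref{th:ass-spectrum-sobolev-2} to produce a Sobolev (hence BV) function $f$ whose graph has Assouad $\theta_0$-spectrum strictly exceeding $1$, Jordan-decompose $f$ as a difference of monotone functions, and use the fact that monotone graphs have all Assouad spectra equal to $1$.

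The one substantive difference is your middle paragraph, where you upgrade $\dimAreg^{\theta}$ to $\dim_A^{\theta}$ via a continuity-and-strict-monotonicity argument. That argument is correct and is a nice general principle (strict monotonicity of $\theta\mapsto\dimAreg^\theta(E)$ on an interval forces $\dim_A^\theta(E)=\dimAreg^\theta(E)$ there). However, it is not needed here: the proof of Theorem \ref{th:ass-spectrum-sobolev-2} in the paper actually establishes the lower bound for the \emph{unregularized} spectrum directly, since the scales are chosen with the exact relation $r_n=R_n^{1/\theta}$. So the paper already has $\dim_A^\theta(\Graph(f))=1+\theta/((1-\theta)p)$, and the corollary follows immediately from the Jordan-decomposition trick without any bridging.
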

	
	We now prove Theorems \ref{th:ass-spectrum-sobolev} and \ref{th:ass-spectrum-sobolev-2}, which provide sharp upper bounds for the Assouad spectra of Sobolev graphs.
	
	\begin{proof}[Proof of Theorem \ref{th:ass-spectrum-sobolev}]
		Give $f \in W^{1,p}(I)$, $1<p<\infty$, we fix a subinterval $J \subset I$ and let $x,y \in J$. Then
		$$
		f(x) - f(y) = \int_x^y f'(t) \, dt
		$$
		and so
		$$
		|f(x)-f(y)| \le \int_{[x,y]} |f'(t)| \, dt \le |x-y|^{1-1/p} \, ( \int_J |f'|^p )^{1/p}
		$$
		whence
		$$
		\osc(f,J) \le |J|^{1-1/p} \, (\int_J |f'|^p)^{1/p}.
		$$
		When $p=1$ we continue to get the inequality $\osc(f,J) \le \int_J |f'|$, and when $p=\infty$ we continue to get the inequality $\osc(f,J) \le |J| \text{ess sup}|f'|$.
		
		Now fix $0<\theta<1$. We will estimate $\dimAreg^\theta(\Graph(f))$ from above. Fix $x_0 \in I$ and $0<R\le 1$. We denote by $\bz_0 = (x_0,f(x_0))$
		the corresponding point in $\Graph(f)$. Let $Q_0$ be the square with center $\bz_0$ and side length $2R$, and let $I_0 = [x_0-R,x_0+R]$ be the projection of $Q_0$ to the $x$-axis.
		
		Given $0<r\le R^{1/\theta}$ we can cover $I_0$ with $M:= \lceil \tfrac{2R}{r} \rceil$ intervals $J_k$, each of length $r$. Note that
		$$
		\frac{2R}{r} \le M \le \frac{2R}{r} + 1 \le \frac{3R}{r}.
		$$
		Then, noting that $\sum_{k=1}^M (\int_{J_k} |f'|^p)^{1/p}\leq M^{1-1/p} (\sum_{k=1}^M\int_{J_k} |f'|^p)^{1/p}$ by H\"older's inequality, we have
		\begin{equation*}\begin{split}
				N(Q_0 \cap \Graph(f),r) &\le \sum_{k=1}^M \left( \frac{\osc(f,J_k)}{r} + 2 \right) \\
				&\le \sum_{k=1}^M \left( 2 + \left(\Barint_{J_k} |f'|^p\right)^{1/p} \right) \\
				&\le 2M + M^{1-1/p} r^{-1/p} \left(  \sum_{k=1}^M \int_{J_k} |f'|^p \right)^{1/p} \\
				&\leq 2M + M^{1-1/p} r^{-1/p} \left( \int_{I} |f'|^p \right)^{1/p} \\
				&\lesssim \frac{R}{r} + \frac{R^{1-1/p}}{r} ||f'||_{p,I},
		\end{split}\end{equation*}
		where $||h||_{p,J} := (\int_J |h|^p)^{1/p}$. Since $R\le 1$ we in turn conclude that
		$$
		N(Q_0 \cap \Graph(f),r) \lesssim R^{1-1/p} r^{-1} ( 1 + ||f'||_{p,I} ) .
		$$
		Next, the restriction $r\le R^{1/\theta}$ implies
		$$
		R^{-1/p} \le \left( \frac{R}{r} \right)^{\tfrac{\theta}{(1-\theta)p}}
		$$
		and hence,
        we have
		$$
		N(Q_0 \cap \Graph(f),r) \lesssim \left( \frac{R}{r} \right)^{1+\tfrac{\theta}{(1-\theta)p}}
		$$
		where the implicit constant depends on $||f'||_{p,I}$. Since this conclusion holds true for all $0<r\le R^{1/\theta} < R \le 1$ and all $\bz_0 \in \Graph(f)$, we conclude that \eqref{eq:ass-spectrum-sobolev} holds as desired.
	\end{proof}
	
	\begin{proof}[Proof of  Theorem \ref{th:ass-spectrum-sobolev-2}]
		Fix $p>1$. For pedagogical reasons, we first construct a function $f$ so that $f \in W^{1,q}(I)$ for all $1\le q<p$ and $\dim_A^\theta(\Graph(f)) = 1 + \theta/((1-\theta)p)$. At the end of the proof, we will indicate how to modify the construction to cover the borderline case $q=p$.
		
		We return to the situation of Example \ref{ex:w1pex} and specialize to a one-parameter family of examples. For $s>2$, let
		\begin{equation}\label{eq:am}
			a_m = m^{1-s}.
		\end{equation}
		Then
		$$
		\eps_m = a_m - a_{m+1} = (s-1)(m+\delta_m)^{-s} \qquad \mbox{for some $0<\delta_m<1$}
		$$
		by the Mean Value Theorem. 
		
		\medskip
		
		We choose $s:=p+1$ and consider the points $\{ \bz_m \}$ defined as in \eqref{eq:pm} and the function $f=f_s:[0,a_1] \to \R$ defined in \eqref{eq:f-defn}. The $q$-energy is
		$$
		E_q(f_s) = \sum_{m=1}^\infty \frac{(m^{1-s} + (m+1)^{1-s})^q}{(m^{-s} - (m+1)^{-s})^{q-1}} \simeq \sum_{m=1}^\infty \frac{m^{(1-s)q}}{m^{-s(q-1)}}  \, ,
		$$
		where the notation $\simeq$ means that the sums on either side are simultaneously finite or infinite. Hence $E_q(f_s) < \infty$ if and only if $q-s<-1$, so
		$$
		f_s \in W^{1,q}([0,a_1]) \qquad \mbox{for all $q<s-1=p$,}
		$$
		and
		$$
		f_s \not \in W^{1,s-1}([0,a_1]).
		$$
		Applying Theorem \ref{th:ass-spectrum-sobolev} and letting $q \to s-1$ yields
		$$
		\assdim^\theta(\Graph(f_s)) \le \dimAreg^\theta (\Graph(f_s))\leq 1 + \frac{\theta}{(1-\theta)(s-1)} \, .
		$$
		Now fix $\theta<(s-1)/s$ and 
		\begin{equation}\label{eq:gamma-bounding}
			\gamma < 1 + \frac{\theta}{(1-\theta)(s-1)} \, ,
		\end{equation}
		We will show that $\assdim^\theta(\Graph(f_s)) \ge \gamma$.
		
		It suffices to find $c>0$ and $\eps>0$ and sequences $(r_n)$ and $(R_n)$ with $0<r_n = R_n^{1/\theta} < R_n \le 1$ so that
		\begin{equation}\label{eq:covering-lower-bound}
			N(Q(0,R_n)\cap \Graph(f_s),r_n) \ge c n^\eps \left( \frac{R_n}{r_n} \right)^\gamma.
		\end{equation}
		In order to obtain \eqref{eq:covering-lower-bound} we will exhibit a large collection $\cD$ of points in $Q(0,R_n) \cap \Graph(f_s)$ with pairwise distances all $>r_n$. See \cite[Section 1.2]{FraserBook} or \cite[Lemma 3.4]{luu:assouad} for more information about the relationship between packing and covering numbers.
		
		We choose the sequence $(R_n)$ to be $R_n = a_n = n^{1-s}$ for all $n \ge n_0$ for a sufficiently large integer $n_0$ (to be determined later). Observe that
		$$
		\bz_m \in Q(0,R_n) \, \Longleftrightarrow \, a_m \le R_n \, \Longleftrightarrow \, m \ge n.
		$$
		Fix a large even integer $N(n) > n$ (to be determined later). The desired set $\cD$ consists of a finite collection of equally spaced collinear points along the line segments $[\bz_m,\bz_{m+1}]$, $[\bz_{m+2},\bz_{m+3}]$, \dots, $[\bz_{n+N(n)-2},\bz_{n+N(n)-1}]$. Along each line segment $[\bz_m,\bz_{m+1}]$, the elements of $\cD$ are equally spaced points at distance approximately $r_n$.
		
		More precisely, let
		\begin{equation}\label{eq:D}
			\cD = \{ \bw_{m,k} \, : \, m = n,n+2,n+4,\ldots,n+N(n)-2, k=1,2,\ldots,M(n) \},
		\end{equation}
		where, for each $m$, the points $\{\bw_{m,k}:k=1,\ldots,M(m)\}$ constitute a maximal $r_n$-separated set in the line segment $[\bz_m,\bz_{m+1}]$. For each $k=1,\ldots,M(m)-1$, we have
		$$
		|\bw_{m,k}-\bw_{m,k+1}| = \frac{|\bz_m-\bz_{m+1}|}{M(m)}
		$$
		and
		\begin{equation}\label{eq:Mmbounds}
			\frac{|\bz_m-\bz_{m+1}|}{M(m)-1} > r_n \ge  \frac{|\bz_m-\bz_{m+1}|}{M(m)} .
		\end{equation}
		Consequently,
		\begin{equation}\label{eq:Mm}
			M(m) = \left \lceil \frac{|\bz_m-\bz_{m+1}|}{r_n} \right \rceil \geq \left \lceil \frac{\sqrt{2}(a_m-a_{m+1})}{r_n} \right \rceil = \left \lceil \sqrt{2} (s-1) n^{(s-1)/\theta} (m+\delta_m)^{-s} \right \rceil.
		\end{equation}
		We impose the condition that $M(m) \ge 2$ for each $m=n,n+2,n+4,\ldots,n+N(n)-2$; this leads to an upper bound on the choice of $N(n)$. In fact, the condition needs only hold for $m=n+N(n)-2$ and we observe that
		$$
		M(n+N(n)-2) \ge 2
		$$
		holds true provided
		$$
		\sqrt{2} (s-1) n^{(s-1)/\theta} (n+N(n)-2+\delta_{n+N(n)-2})^{-s} \ge 2
		$$
		which holds if and only if
		\begin{equation}\label{eq:Nn1}
			N(n) \le \left( \frac{s-1}{\sqrt{2}} \right)^{1/s} n^{(s-1)/(s\theta)} - (n-2) - \delta_{n+N(n)-2}.
		\end{equation}
		We also impose the condition that $|\bw_{m,k} - \bw_{m',k'}| > r_n$ whenever $m \ne m'$ and $k\in\{1,\ldots,M(m)\}$, $k'\in\{1,\ldots,M(m')\}$. Assume without loss of generality that $m<m'$. Then $m\le n+N(n)-4$ and $m'\le n+N(n)-2$ and hence
		\begin{equation*}\begin{split}
				|\bw_{m,k} - \bw_{m',k'}| &\ge a_{m+1}-a_{m'}  \ge a_{n+N(n)-3} - a_{n+N(n)-2} \\
				&= \eps_{n+N(n)-3} \\
				&= (s-1)(n+N(n)-3 + \delta_{n+N(n)-3})^{-s}.
		\end{split}\end{equation*}
		Thus the desired condition holds provided
		$$
		(s-1)(n+N(n)-3 + \delta_{n+N(n)-3})^{-s} > r_n = n^{(1-s)/\theta}
		$$
		which in turn holds if and only if
		\begin{equation}\label{eq:Nn2}
			N(n) < (s-1)^{1/s} \, n^{(s-1)/(s\theta)} - (n-3) - \delta_{n+N(n)-3}.
		\end{equation}
		The following claim holds in view of the assumption $\theta<(s-1)/s$.
		
		\begin{claim}\label{claim1}
			There exist a constant $c_0(s)>0$ and a large integer $n_0=n_0(s)$ so that, for all $n \ge n_0$, both \eqref{eq:Nn1} and \eqref{eq:Nn2} are satisfied when $N(n)$ is chosen to be the largest even integer less than or equal to $c_0(s) n^{(s-1)/(s\theta)}$. 
		\end{claim}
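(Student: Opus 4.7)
The plan is to choose $c_0(s)$ to be a small positive multiple of $\left(\tfrac{s-1}{\sqrt{2}}\right)^{1/s}$ and then verify that both \eqref{eq:Nn1} and \eqref{eq:Nn2} hold for all sufficiently large $n$ when $N(n)$ is taken to be the largest even integer not exceeding $c_0(s) \, n^{(s-1)/(s\theta)}$. The key structural input, which drives the entire argument, is the assumption $\theta<(s-1)/s$, equivalently $(s-1)/(s\theta) > 1$, so that the main term $n^{(s-1)/(s\theta)}$ grows strictly super-linearly in $n$.

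First I would observe that since $\sqrt 2 > 1$, we have $\left(\tfrac{s-1}{\sqrt{2}}\right)^{1/s} < (s-1)^{1/s}$, so \eqref{eq:Nn1} is the stricter of the two conditions; it therefore suffices to verify \eqref{eq:Nn1}. Set $A(s) := \left(\tfrac{s-1}{\sqrt{2}}\right)^{1/s}$ and pick $c_0(s) := \tfrac{1}{2} A(s)$. Because $\delta_m \in (0,1)$ for every $m$, condition \eqref{eq:Nn1} is implied by the simpler inequality
\begin{equation*}
c_0(s)\, n^{(s-1)/(s\theta)} \;\le\; A(s)\, n^{(s-1)/(s\theta)} - n + 1,
\end{equation*}
which rearranges to
\begin{equation*}
\tfrac{1}{2} A(s)\, n^{(s-1)/(s\theta)} \;\ge\; n - 1.
\end{equation*}
Since $(s-1)/(s\theta)>1$, the left-hand side grows faster than any linear function of $n$, so there exists $n_0=n_0(s)$ beyond which this holds. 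For such $n$, \eqref{eq:Nn1} holds, and \eqref{eq:Nn2} follows a fortiori by the same argument (indeed with room to spare).

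I would then close the loop by noting that the largest even integer $N(n) \le c_0(s)\,n^{(s-1)/(s\theta)}$ is automatically bounded above by $c_0(s)\,n^{(s-1)/(s\theta)}$ itself, so the estimates above apply directly. The one mild subtlety to verify is that $N(n)$ is well-defined (i.e., the even integer chosen is at least $2$, as required implicitly by the construction in \eqref{eq:D}); this likewise follows from $c_0(s)\, n^{(s-1)/(s\theta)} \to \infty$, by enlarging $n_0$ if necessary. The main (minor) obstacle is purely bookkeeping: tracking the additive constants $(n-2)$, $(n-3)$, and the bounded $\delta$-terms, all of which are absorbed harmlessly into the super-linear leading term once $n$ is large enough. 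No further ideas are required beyond the observation that $(s-1)/(s\theta)>1$.
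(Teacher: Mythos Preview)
Your proof is correct and is precisely the argument the paper has in mind; indeed, the paper does not spell out a proof of Claim \ref{claim1} at all but merely asserts that ``the following claim holds in view of the assumption $\theta<(s-1)/s$,'' and your write-up supplies exactly the missing details (choosing $c_0(s)$ as a fixed fraction of the leading coefficient and using $(s-1)/(s\theta)>1$ to absorb the linear and bounded terms).
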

		
		For the remainder of the proof, we consider only integers $n \ge n_0$, where $n_0$ is chosen as in the preceding claim. 
		
		By the construction and the above properties, the set $\cD$ defined in \eqref{eq:D} with this choice of $N(n)$ and integers $M(m)$ as in \eqref{eq:Mm} has the property that
		$|\bw-\bw'|>r_n$ whenever $\bw$ and $\bw'$ are distinct points in $\cD$. To finish the proof, we estimate the cardinality $\# \cD$ of $\cD$ from below. Note that
		$$
		\# \cD = M(n) + M(n+2) + \cdots + M(n+N(n)-2)
		$$
		and, by \eqref{eq:Mmbounds},
		$$
		\# \cD \ge \frac{|\bz_n-\bz_{n+1}| + |\bz_{n+2} - \bz_{n+3}| + \cdots + |\bz_{n+N(n)-2} - \bz_{n+N(n)-1}|}{r_n}.
		$$
		By the choice of the points $\bz_m=(a_m,(-1)^m a_m)$,
		$$
		|\bz_m-\bz_{m+1}| = \sqrt{(a_m-a_{m+1})^2 + (a_m+a_{m+1})^2} = \sqrt{2} \, \sqrt{a_m^2 + a_{m+1}^2} \ge a_m + a_{m+1}.
		$$
		Thus
		\begin{equation*}\begin{split}
				\# \cD 
				&\ge \frac{a_n+a_{n+1}+\cdots+a_{n+N(n)-1}}{r_n} \\
				&= n^{(s-1)/\theta} \sum_{m=n}^{n+N(n)-1} m^{1-s} \\
				&\ge c_1(s) n^{(s-1)/\theta} (n^{2-s} - (n+N(n)-1)^{2-s}) \\
				&\ge c_2(s) n^{(s-1)/\theta+2-s},
		\end{split}\end{equation*}
        where $c_1(s)$ depends on the integral test constant, and $c_2(s)$ depends on $c_0(s), c_1(s),$ and the domination constant of $n^{2-s}$ over $(n+n^{\tfrac{s-1}{s\theta}}-1)^{2-s}$.
		Note also that
		$$
		n^\eps (R_n/r_n)^\gamma = n^{\eps + (s-1)(\tfrac1\theta-1)\gamma}.
		$$
		In order to verify that \eqref{eq:covering-lower-bound} holds, it suffices to show that
		\begin{equation}\label{eq:final-estimate}
			(s-1)(\frac1\theta-1)\gamma < \frac{s-1}{\theta} + 2-s;
		\end{equation}
		then a suitable $\eps = \eps(s,\gamma,\theta)$ can be chosen so that
		$$
		\eps + (s-1)(\frac1\theta-1)\gamma < \frac{s-1}{\theta} + 2-s.
		$$
		The inequality in \eqref{eq:final-estimate} is the same as our assumed bound \eqref{eq:gamma-bounding}.
		
		To extend to the borderline case, we introduce a logarithmic factor. We replace the choice of $(a_m)$ in \eqref{eq:am} with the following:
		\begin{equation}\label{eq:am2}
			a_m := m^{1-s} \log^{-2}(m), \qquad s>2, m \ge 2.
		\end{equation}
		In this case, we find (again by the Mean Value Theorem) that
		$$
		\eps_m = a_m - a_{m+1} = \left. (s-1) x^{-s} \log^{-2}(x)\bigl( 1 + \tfrac{2}{\log(x)} \bigr) \right|_{x=m+\delta_m}
		$$
		for some $0<\delta_m<1$. Setting $s=p+1$ as before, we let $f = f_s:[0,a_1] \to \R$ be the function defined in \eqref{eq:f-defn}. The $p$-energy of $f$ is
		$$
		E_p(f_s) \simeq \sum_{m=2}^\infty m^{-1} \log^{-2}(m) < \infty.
		$$
		Thus $f_s \in W^{1,p}([0,a_1])$. It remains to show that the Assouad spectrum formula
		$$
		\assdim^\theta(\Graph(f_s)) = 1 + \frac{\theta}{(1-\theta)(s-1)}
		$$
		continues to hold. As before, only the lower bound needs to be verified, so we fix $\gamma$ as in \eqref{eq:gamma-bounding} and show that $\assdim^\theta(\Graph(f_s)) > \gamma$. The argument is similar to that described in detail in the first part of the proof. With the choice $R_n = a_n$, $n\ge 2$, and $r_n = R_n^{1/\theta}$ as before, we consider $M(m) = \lceil \tfrac{\sqrt{2} \eps_m}{r_n} \rceil$. The condition $M(m) \ge 2$ for all $m=n, n+2,\ldots,n+N(n)-2$, reduces to
		\begin{equation}\label{eq:mn1}
			M(n+N(n)-2) \ge 2.
		\end{equation}
		Define
		$$
		\eta_n := \left.\left.\left( (1+\tfrac{2}{\log(x)})^{1/s} - 1 \right)\right|_{x=m+\delta_m}\right|_{m=n+N(n)-2}
		$$ 
		and note that the condition $n \ge 2$ implies $\eta_n < 1$. The desired condition \eqref{eq:mn1} is implied by
		\begin{equation}\label{eq:2a}
			\left. \left. x^s \log^2(x) \right|_{x=m+\delta_m} \right|_{m=n+N(n)-2} \le \frac{s-1}{\sqrt{2}} n^{(s-1)/\theta} \log^{2/\theta}(n) (1+\eta_n)^s.
		\end{equation}
		Fixing the Orlicz function $\Phi(t) = t\,\log^{2/s}(t)$ and setting $c(s) = ((s-1)/\sqrt{2})^{1/s}$ we rewrite \eqref{eq:2a} as
		\begin{equation}\label{eq:2b}
			\left. \left. \Phi(x) \right|_{x=m+\delta_m} \right|_{m=n+N(n)-2} \le c(s) n^{(s-1)/(s\theta)} \log^{2/(s\theta)}(n) (1+\eta_n),
		\end{equation}
		or equivalently,
		\begin{equation}\label{eq:2c}
			\left. (m+\delta_m) \right|_{m=n+N(n)-2} \le \Phi^{-1} \left( c(s) n^{(s-1)/(s\theta)} \log^{2/(s\theta)}(n) (1+\eta_n) \right).
		\end{equation}
		Inequality \eqref{eq:2c} translates to the following constraint on the choice of $N(n)$:
		\begin{equation}\label{eq:2d}
			N(n) \le \Phi^{-1} \left( c(s) n^{(s-1)/(s\theta)} \log^{2/(s\theta)}(n) (1+\eta_n) \right) - (n-2) - \delta_{n+N(n)-2}.
		\end{equation}
		Recall that both $\eta_n$, $n \ge 2$, and $\delta_m$, $m \ge 2$, lie in the interval $(0,1)$. Furthermore, the assumption $\theta < \tfrac{p}{p+1} = \tfrac{s-1}{s}$ implies that $(s-1)/(s\theta)>1$. For large values of the argument $t$, we have 
		$$
		\Phi^{-1}(t) \simeq t \log^{-2/s}(t)(1+o(1))
		$$
		and hence \eqref{eq:2d} translates to
		\begin{equation}\label{eq:2e}
			N(n) \le c'(s) n^{(s-1)/(s\theta)} \log^{\tfrac2s(\tfrac1\theta-1)}(n) (1+\eta_n) (1+o(1))
		\end{equation}
		where $c'(s) = c(s)(s\theta/(s-1))^{2/s}$.
		
		In a similar fashion, we analyze the previous second condition
		\begin{equation}\label{eq:mn2}
			\eps_{n+N(n)-3} > r_n.
		\end{equation}
		We now define
		$$
		\overline\eta_n := \left.\left.\left( (1+\tfrac{2}{\log(x)})^{1/s} - 1 \right)\right|_{x=m+\delta_m}\right|_{m=n+N(n)-3}
		$$
		and we again note that $\overline\eta_n < 1$ since $n \ge 2$. After a similar chain of reasoning, we deduce that \eqref{eq:mn2} is implied by the following constraint on $N(n)$:
		\begin{equation}\label{eq:3a}
			N(n) \le \Phi^{-1} \left( \overline{c}(s) n^{(s-1)/(s\theta)} \log^{2/(s\theta)}(n) (1+\overline\eta_n) \right) - (n-3) - \delta_{n+N(n)-3}.
		\end{equation}
		Here $\overline{c}(s) = (s-1)^{1/s}$. Using the previous asymptotics for the inverse of the Orlicz function $\Phi$, we reduce \eqref{eq:3a} to
		\begin{equation}\label{eq:3b}
			N(n) \le \overline{c}'(s) n^{(s-1)/(s\theta)} \log^{\tfrac2s(\tfrac1\theta-1)}(n) (1+\overline\eta_n) (1 + o(1))
		\end{equation}
		where $\overline{c}'(s) = \overline{c}(s)(s\theta/(s-1))^{2/s}$. Our previous Claim \ref{claim1} is now replaced by
		
		\begin{claim}\label{claim2}
			There exist a constant $\overline{c}_0(s)>0$ and a large integer $\overline{n}_0=\overline{n}_0(s)$ so that, for all $n \ge \overline{n}_0$, both \eqref{eq:2e} and \eqref{eq:3b} are satisfied when $N(n)$ is chosen to be the largest even integer less than or equal to $\overline{c}_0(s) n^{(s-1)/(s\theta)}$. 
		\end{claim}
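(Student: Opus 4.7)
The plan is to verify both inequalities by dividing through by the leading factor $n^{(s-1)/(s\theta)}$ and exploiting the growth of the residual logarithmic factor $\log^{(2/s)(1/\theta-1)}(n)$, which tends to infinity because the assumption $\theta<1$ forces $1/\theta-1>0$. Because this factor provides an unbounded amount of slack, the constraints turn out to be easy to satisfy simultaneously.

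First I would fix any constant $\overline{c}_0(s)$ strictly less than $\min\{c'(s),\overline{c}'(s)\}$ and let $N(n)$ denote the largest even integer not exceeding $\overline{c}_0(s)\,n^{(s-1)/(s\theta)}$. With this choice, the indices $m = n+N(n)-2$ and $m = n+N(n)-3$ both tend to $\infty$ with $n$, so the quantities $\eta_n$ and $\overline{\eta}_n$, defined through $1/\log(m+\delta_m)$, both tend to $0$; the same is true of the $o(1)$ corrections coming from the Orlicz asymptotic $\Phi^{-1}(t) \simeq t\log^{-2/s}(t)(1+o(1))$. Dividing \eqref{eq:2e} by $n^{(s-1)/(s\theta)}$ reduces the inequality to
$$\overline{c}_0(s) \le c'(s)\,\log^{(2/s)(1/\theta-1)}(n)\,(1+\eta_n)(1+o(1)),$$
whose right-hand side tends to $+\infty$ and therefore exceeds $\overline{c}_0(s)$ for all $n$ larger than some threshold $n_1(s)$. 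An identical reduction applied to \eqref{eq:3b}, with $\overline{c}'(s)$ and $\overline{\eta}_n$ in place of $c'(s)$ and $\eta_n$, yields a second threshold $n_2(s)$. Setting $\overline{n}_0(s):=\max\{n_1(s),n_2(s),2\}$ then guarantees both inequalities whenever $n\ge\overline{n}_0(s)$.

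I do not anticipate any serious obstacle. The only technical point worth double-checking is that the subtracted linear remainders $(n-2)+\delta_{n+N(n)-2}$ in \eqref{eq:2d} and $(n-3)+\delta_{n+N(n)-3}$ in \eqref{eq:3a}, which were absorbed into the $(1+o(1))$ factor when passing to \eqref{eq:2e} and \eqref{eq:3b}, are genuinely negligible compared with the main term $n^{(s-1)/(s\theta)}\log^{(2/s)(1/\theta-1)}(n)$. This follows because the assumption $\theta<p/(p+1)=(s-1)/s$ forces $(s-1)/(s\theta)>1$, so any linear-in-$n$ term is dominated by $n^{(s-1)/(s\theta)}$ and, a fortiori, by the product with the logarithmic factor. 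The remainder of the argument is a routine asymptotic comparison, essentially identical in structure to the verification behind Claim \ref{claim1}, with the Orlicz logarithmic factor merely providing additional headroom in the borderline case.
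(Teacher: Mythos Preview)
Your proposal is correct and is precisely the routine verification the paper has in mind; indeed, the paper does not spell out a proof of Claim~\ref{claim2} at all, treating it as evident from the asymptotics just derived (exactly as with Claim~\ref{claim1}). Your observation that the logarithmic factor $\log^{(2/s)(1/\theta-1)}(n)\to\infty$ gives unbounded slack is the whole point, and in fact shows that \emph{any} positive $\overline{c}_0(s)$ would do, so your restriction $\overline{c}_0(s)<\min\{c'(s),\overline{c}'(s)\}$ is harmless but unnecessary.
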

		
		The final part of the proof is exactly the same as in the previous case and is left to the reader. The desired lower bound for the cardinality $\#\cD$ of the set defined in \eqref{eq:D} holds due to the choice of $\gamma$ in \eqref{eq:gamma-bounding}. Note that additional logarithmic factors show up in the relevant inequalities, however, these are irrelevant to the desired conclusion as the leading order polynomial behavior of both sides with respect to $n$ dominates.
		
		This concludes the proof of Theorem \ref{th:ass-spectrum-sobolev-2}.
	\end{proof}

\end{document}